\documentclass[11pt]{amsart}

\usepackage[T1]{fontenc}
\usepackage[utf8]{inputenc}
\usepackage{lmodern}
\usepackage[a4paper,margin=1.05in]{geometry}
\usepackage{amsmath,amssymb,amsthm,mathtools}
\usepackage[expansion=false]{microtype}
\usepackage{enumitem}
\usepackage[hidelinks]{hyperref}
\hypersetup{
  pdftitle={A Special Pi01 Class with the Join Property but without Pseudojump Inversion},
  pdfauthor={Patrizio Cintioli},
  pdfkeywords={Pi01 classes, Turing degrees, Join Property, pseudojump inversion, generalized lowness}
}

\newtheorem{theorem}{Theorem}[section]
\newtheorem{lemma}[theorem]{Lemma}
\newtheorem{proposition}[theorem]{Proposition}

\theoremstyle{definition}
\newtheorem{definition}[theorem]{Definition}
\newtheorem{remark}[theorem]{Remark}
\numberwithin{equation}{section}

\newcommand{\N}{\mathbb N}
\newcommand{\Cantor}{2^{\mathbb N}}
\newcommand{\Strings}{2^{<\mathbb N}}
\newcommand{\Addr}{\mathcal A}
\newcommand{\leT}{\leq_{\mathrm T}}
\newcommand{\nleT}{\nleq_{\mathrm T}}
\newcommand{\geT}{\geq_{\mathrm T}}
\newcommand{\ltT}{<_{\mathrm T}}
\newcommand{\gtT}{>_{\mathrm T}}
\newcommand{\eqT}{\equiv_{\mathrm T}}
\newcommand{\join}{\mathbin{\oplus}}
\newcommand{\concat}{\mathbin{{}^{\smallfrown}}}
\newcommand{\GL}{\mathrm{GL}_1}
\newcommand{\JP}{\mathsf{JP}}
\newcommand{\PJI}{\mathsf{PJI}}
\newcommand{\dom}{\operatorname{dom}}
\newcommand{\degT}{\operatorname{deg}_{\mathrm T}}
\newcommand{\emptyseq}{\langle\rangle}
\newcommand{\cyl}[1]{[#1]}
\newcommand{\ep}{\theta}

\title[Join property without pseudojump inversion]{A Special $\Pi^0_1$ Class with the Join Property\texorpdfstring{\\}{} but without Pseudojump Inversion}
\author{Patrizio Cintioli}
\address{Mathematics Division, School of Science and Technology, University of Camerino, Italy}
\email{patrizio.cintioli@unicam.it}

\subjclass[2020]{Primary 03D28}
\keywords{$\Pi^0_1$ classes, Turing degrees, Join Property, pseudojump inversion, generalized lowness, finite injury}

\begin{document}

\begin{abstract}
Jananthan and Simpson showed that special $\Pi^0_1$ classes exist in
three of the four possible cases determined by whether the Join
Property and the Pseudojump Inversion Property hold, leaving open the
existence of a special $\Pi^0_1$ class with the Join Property but not
the Pseudojump Inversion Property. We construct such a class, thereby
answering their question affirmatively and completing the four-way
existence pattern.

Moreover, the class can be chosen to consist entirely of generalized
low reals, with the Join Property holding locally on every nonempty
relatively basic open subclass. More generally, every special
$\Pi^0_1$ class contained in $\mathrm{GL}_1$ has a special
$\Pi^0_1$ extension, still contained in $\mathrm{GL}_1$, on which
the Join Property holds locally.

We also show that no class contained in $\mathrm{GL}_1$ has the
Pseudojump Inversion Property. An independent appendix proves that,
for any class with the Join Property, every jump-inversion fibre above
$0'$ is countably infinite.
\end{abstract}
\maketitle

\section{Introduction and statements}

At the level of Turing degrees, the Turing jump induces a map whose
range is exactly the upper cone above $0'$. One inclusion is
immediate, since $0'\leT X'$ for every real $X$, while the converse
is the content of Friedberg's Jump Inversion Theorem: every Turing
degree above $0'$ has a preimage under the jump. A remarkable
refinement, due to Jockusch and Soare \cite[Theorem 1.4]{JS}, says that,
for every real $A\geT0'$ and every prescribed special
$\Pi^0_1$ class $\mathcal C$, some $B\in\mathcal C$ satisfies
$B'\eqT B\join0'\eqT A$.

The Posner--Robinson Join Theorem and the Jockusch--Shore Pseudojump
Inversion Theorem are two closely related classical results concerning
the interaction between the Turing jump, joining, and relative
computable enumerability. This naturally raises the question whether
analogous class-restricted forms of these two theorems hold.

Pseudojump operators were introduced and studied by Jockusch and
Shore as generalizations of the Turing jump. In the form relevant
here, they are the operators
\[
  J_e(X)=X\join W_e^X,
\]
where $W_e^X$ is the $e$th set computably enumerable in $X$.
Their Pseudojump Inversion Theorem shows, in particular, that for
each $e$ there is a noncomputable c.e.\ set $B$ such that
\[
  J_e(B)\eqT0'.
\]
The usefulness of this generalization goes beyond the formal analogy
with the jump: pseudojump inversion provides a flexible method for
constructing Turing degrees with prescribed degree-theoretic
properties.

Jockusch and Shore used this
method to obtain, among other applications, finite-injury
constructions at the various levels of the high and low hierarchies
\cite{JSh}.
Their subsequent work developed finite and transfinite
iterations of these operators through the REA hierarchy, together
with general completeness and join theorems
\cite[Theorems 2.6 and 3.2]{JShII}.

A recurring question in the later theory has been which additional
degree-theoretic constraints can be imposed on an inversion.
Coles, Downey, Jockusch, and LaForte studied which extra properties
can be imposed on sets completing a pseudojump operator and showed,
in particular, that pseudojump completion by c.e. sets is not always compatible
with upper cone avoidance \cite{CDJL}. Further work has continued this
theme of compatibility between pseudojump inversion and additional
requirements; see, for example, \cite{DG}. The problem considered
here is a class-restricted version of the same general phenomenon:
rather than imposing only a degree-theoretic side condition on the
witness, we require the witness to lie in a prescribed special
$\Pi^0_1$ class.

For standard background on classical computability theory and the
c.e.\ degrees, see Soare \cite{Soare}. For jump classes, jump
inversion, and pseudojump operators (called \emph{hops} there), see
Odifreddi \cite[\S\S XI.1, XIII.4]{Odifreddi}.

We now fix notation. We work in Cantor space
$\Cantor=\{0,1\}^{\N}$, where
$\N=\{0,1,2,\ldots\}$, and identify each real $X$ with the subset
$\{n:X(n)=1\}$ of $\N$.

For reals $X,Y$, we write $X\leT Y$, equivalently $Y\geT X$, if
membership in $X$ is decidable by a Turing machine with oracle $Y$.
We use the corresponding strict and equivalence relations:
\[
\begin{aligned}
  X\eqT Y
    &\quad\Longleftrightarrow\quad
      X\leT Y\ \text{and}\ Y\leT X,\\
  X\ltT Y
    &\quad\Longleftrightarrow\quad
      X\leT Y\ \text{and}\ Y\nleT X,\\
  X\gtT Y
    &\quad\Longleftrightarrow\quad
      Y\ltT X.
\end{aligned}
\]
In degree comparisons, $0$ denotes the computable Turing degree,
that is, the degree of $\varnothing$; thus
\[
  X\gtT0
  \quad\Longleftrightarrow\quad
  0\ltT X
  \quad\Longleftrightarrow\quad
  X\text{ is noncomputable}.
\]

The \emph{join} $X\join Y$ is obtained by interleaving the two reals:
\[
  (X\join Y)(2n)=X(n),\qquad
  (X\join Y)(2n+1)=Y(n).
\]
An oracle for $X\join Y$ therefore gives access to both $X$ and $Y$.
Fix a standard effective enumeration $(\Phi_e)_{e\in\N}$ of oracle
Turing machines. The \emph{Turing jump} of $X$ is
\[
  X'=\{e:\Phi_e^X(e)\downarrow\},
\]
where $\downarrow$ denotes convergence. We write $0'=\varnothing'$
for the ordinary halting set.

A set is \emph{computably enumerable} (c.e.) in $X$ if it can be
enumerated by a Turing machine with oracle $X$. Fix a standard
effective enumeration $(W_e^X)_{e\in\N}$ of these sets, and write
$W_e=W_e^{\varnothing}$ in the unrelativised case.

Let $\Strings$ denote the set of finite binary strings. For
$\sigma\in\Strings$ and $X\in\Cantor$, the notation $\sigma\prec X$
means that $\sigma$ is an initial segment of $X$. The corresponding
\emph{cylinder} is
\[
  \cyl{\sigma}=\{X\in\Cantor:\sigma\prec X\}.
\]
These cylinders form a basis for the topology of Cantor space.
A \emph{c.e. open subset of Cantor space} is a union of a computably
enumerable family of cylinders. A $\Pi^0_1$ class is the complement
in Cantor space of such an open set, and is \emph{special} if it is
nonempty and has no computable member.
We write
\[
  \GL=\{X\in\Cantor:X'\eqT X\join0'\}
\]
for the class of \emph{generalized low}, or $\mathrm{GL}_1$, reals.
The subscript $1$ refers to the first Turing jump.

Jananthan and Simpson formulate the following class-restricted
versions of the Join Theorem and the Pseudojump Inversion Theorem.

\begin{definition}[Jananthan--Simpson \cite{JS}]
A class $\mathcal P\subseteq\Cantor$ has the \emph{Join Property},
denoted $\JP(\mathcal P)$, if, whenever
$A\geT Z\join0'$ and $Z\gtT0$, some $B\in\mathcal P$ satisfies
\begin{equation}\label{eq:JP}
  A\eqT B'\eqT B\join0'\eqT B\join Z.
\end{equation}
It has the \emph{Pseudojump Inversion Property}, denoted
$\PJI(\mathcal P)$, if, for every $A\geT0'$ and for all $e\in\N$,
some $B\in\mathcal P$ satisfies
\begin{equation}\label{eq:PJI}
  A\eqT J_e(B)\eqT B\join0'.
\end{equation}
\end{definition}

Thus the Join Property requires the witness in the
Posner--Robinson Join Theorem \cite{PR} to lie in a prescribed
class, while the Pseudojump Inversion Property imposes the
corresponding restriction on the Jockusch--Shore Pseudojump
Inversion Theorem \cite{JSh}.

Jananthan and Simpson realized three of the four logically possible
combinations of these two properties for special $\Pi^0_1$ classes.
The class $\mathrm{CPA}$ of complete consistent extensions of Peano
Arithmetic has both properties. There are special $\Pi^0_1$ classes
with the Pseudojump Inversion Property but without the Join Property,
and they construct a special $\Pi^0_1$ class having neither property.
The remaining possibility was left open:
\[
\begin{array}{c|cc}
 & \PJI & \neg\PJI \\ \hline
\JP
  & \mathrm{CPA}
  & \text{open in \cite{JS}}\\
\neg\JP
  & \text{known}
  & \text{known}.
\end{array}
\]
Equivalently, they ask whether there exists a special $\Pi^0_1$
subclass of Cantor space which has the Join Property but not the
Pseudojump Inversion Property \cite[Introduction]{JS}.

Our main theorem supplies precisely this missing case and therefore
completes the four-way existence pattern for the two properties.

\begin{theorem}\label{thm:main}
There is a special $\Pi^0_1$ class $\mathcal P\subseteq\Cantor$ such that
\[
  \mathcal P\subseteq\GL,\qquad
  \JP(\mathcal P),\qquad
  \neg\PJI(\mathcal P).
\]
More strongly, the Join Property holds locally on $\mathcal P$:
for every finite binary string $\sigma$ such that
$\mathcal P\cap\cyl{\sigma}\ne\varnothing$, the class
$\mathcal P\cap\cyl{\sigma}$ has the Join Property.
\end{theorem}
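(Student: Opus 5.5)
The plan has three parts: the failure of $\PJI$ (easy), a general extension theorem that yields local $\JP$ while preserving $\GL$ (the substance), and a base special class inside $\GL$ to start from.

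\emph{Failure of $\PJI$.} I will show that no class $\mathcal Q\subseteq\GL$ has $\PJI$, by choosing the trivial operator. Fix $e$ with $W_e^X=\varnothing$ for every $X$, so $J_e(X)\eqT X$. Take $A=0'$. A witness $B\in\mathcal Q$ would satisfy $0'\eqT B\eqT B\join0'$. Since $B\in\GL$, this gives
\[
  B'\eqT B\join 0'\eqT B,
\]
which contradicts $B\ltT B'$. So $\neg\PJI(\mathcal P)$ follows from $\mathcal P\subseteq\GL$, and it remains to build a special $\mathcal P\subseteq\GL$ on which $\JP$ holds locally.

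\emph{Extension theorem.} I will prove that every special $\Pi^0_1$ class $\mathcal Q\subseteq\GL$ has a special $\Pi^0_1$ extension $\mathcal P\supseteq\mathcal Q$, still inside $\GL$, on which $\JP$ holds locally.

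The construction adjoins to $\mathcal Q$ effectively coded "copies" of $\mathcal Q$ hanging off every node. Fix a computable coding in which a path through $\mathcal P$ is a sequence of blocks $\tau_0\concat\tau_1\concat\cdots$. Each block either follows $\mathcal Q$ or branches into a fresh coded region, where one bit can be written. This design ensures two things.
\begin{enumerate}[label=(\roman*)]
\item Every cylinder meeting $\mathcal P$ contains a full coding region. This gives locality: the $\JP$ argument can be run from any $\sigma$ with $\mathcal P\cap\cyl\sigma\neq\varnothing$.
\item $\mathcal P$ has no computable member. A member either ends in $\mathcal Q$, which is special, or its coding regions are built from members of $\mathcal Q$, so they again have no computable member.
\end{enumerate}
Given $A\geT Z\join0'$ with $Z\gtT0$, I will build $B=\bigcup_s\beta_s$ by a Posner--Robinson style construction relative to $A$. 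At stage $2e$, I use $0'$ to decide whether $\Phi_e^{B}(e)$ can be forced to converge within $\mathcal P\cap\cyl{\beta_s}$. This is a $\Sigma^0_1$ question about a $\Pi^0_1$ class, and a nonempty test is $\Pi^0_1$, so $0'$ suffices. At stage $2e+1$, I code $A(e)$. The coding location is chosen through $Z$: I search for the least split where the two sides differ in a way only $Z$ can resolve, using $Z\gtT0$ so that the search cannot be carried out computably. This gives
\[
  B\join Z\geT A
  \quad\text{and}\quad
  B'\leT A.
\]
We also have $A\leT B\join0'$, because $0'$ can recompute the whole construction. Finally, $B\in\GL$: either $B$ lies in a coded region built from $\mathcal Q\subseteq\GL$, or the construction forces the jump, giving $B'\leT B\join0'$ uniformly.

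\emph{Base class.} Theorem~\ref{thm:main} then follows by applying the extension theorem to some special $\Pi^0_1$ class $\mathcal Q_0\subseteq\GL$. To obtain $\mathcal Q_0$, I will build a special $\Pi^0_1$ class all of whose members are low, by a finite-injury tree construction. Each node $e$ prunes so that for every surviving path $X$ the value $\Phi_e^X(e)$ is either forced to converge or its convergence is killed. Each node changes its mind finitely often, which makes $X'\leT 0'$ uniformly, hence $X'\eqT X\join0'$. At the same time, every computable path is removed by diagonalising against total $\Phi_i$, while the class is kept nonempty.

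\emph{Main obstacle.} I expect the hardest step to be in the extension theorem. There, the decoding of $A$ from $B\join Z$ must be made compatible with $B$ lying in a $\Pi^0_1$ class whose members are all $\GL$, uniformly across infinitely many coding regions. If direct coding fails, I would first try letting the coded regions be copies of the low class $\mathcal Q_0$ itself. The low-forcing of the base construction can then be reused relative to each region's address.
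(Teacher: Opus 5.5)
Your argument for $\neg\PJI$ is the same as the paper's (the trivial operator with $A=0'$), but the base class you propose cannot exist. A special $\Pi^0_1$ class is uncountable: a nonempty countable closed subset of $\Cantor$ has an isolated point, and an isolated point of a $\Pi^0_1$ class is computable. Every low real satisfies $X\leT X'\eqT0'$, so it is $\Delta^0_2$, and there are only countably many such reals. Hence no special $\Pi^0_1$ class consists of low reals, and no tree construction makes $X'\leT0'$ hold for all paths. What can be achieved is $X'\leT X\join0'$. Here $X$ itself is consulted to find which branch it takes at each node, and $0'$ answers only the finitely-changing decision made there. The paper does not build this base class; it takes a special $\mathcal Q\subseteq\GL$ from \cite[Theorem 6.2]{JS}. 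Your fallback of using copies of a low class runs into the same impossibility.

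The extension step has the more serious gap. Because $\mathcal P$ is closed, it contains every real that passes through infinitely many coding regions. Such a real lies in no copy of $\mathcal Q$, so neither specialness nor membership in $\GL$ is inherited. With a computable block structure, taking a fixed exit and the bit $0$ at every level already gives a computable member, so your claim (ii) is false. Forcing the jump while building the single path $B$ for given $A,Z$ says nothing about the other complete members. Your stage-$2e$ step is also not $0'$-effective as stated. If you ask for an extension on which $\Phi_e(e)$ converges and which still meets $\mathcal P$, that is a $\Sigma^0_2$ question. If you ask instead whether all of $\mathcal P\cap\cyl{\beta_s}$ converges, the question is $\Sigma^0_1$, but a negative answer moves you into a $\Pi^0_1$ subclass. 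Its later coding locations cannot be found by $B\join Z$ without $0'$.

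The missing idea is to build all these decisions into $\mathcal P$ itself. A finite-injury construction makes each exit at an address of length $n$ decide the c.e. open set $U_n$ on its region. For $n=2e$ this set is $\{X:\Phi_e^X(e)\downarrow\}$. For $n=2e+1$ it diagonalises against $\varphi_e$; a negative decision is impossible for a computable path, because of the two child copies of $\mathcal Q$. The positive decisions are recorded in uniformly c.e. sets $S_{a,t}$. Every complete $X$ can then find its own exits, and $0'$ reads off $X'$ from these sets. The same structure supplies the decoding from $B\join Z$, which your ``least split'' step leaves unspecified. At each level choose an exit $i$ with $i\in\widetilde Z\iff i\in S_{a,t}$; such an $i$ exists because $\overline{\widetilde Z}$ is not c.e. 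Since the exit can be read from $B$, the bit $\widetilde Z(i)$ replaces the $0'$-query.
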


The generalized-lowness requirement plays two roles in the argument.
First, the construction ensures that every member of the limit
class $\mathcal P$ is generalized low. Second, generalized lowness
provides the separation: no class contained in $\GL$ can have
the Pseudojump Inversion Property. Thus the main work is to
construct a special $\Pi^0_1$ class with the Join Property
whose members all belong to $\GL$.

In fact, the construction gives a stronger extension theorem.

\begin{theorem}[Extension theorem]\label{thm:extension}
Let $\mathcal Q\subseteq\Cantor$ be a special $\Pi^0_1$ class with
$\mathcal Q\subseteq\GL$. There is a special $\Pi^0_1$ class
$\mathcal P$ such that
\[
  \mathcal Q\subseteq\mathcal P\subseteq\GL,
\]
and, for every $\sigma\in\Strings$ such that
$\mathcal P\cap\cyl{\sigma}\ne\varnothing$, the class
$\mathcal P\cap\cyl{\sigma}$ has the Join Property.
\end{theorem}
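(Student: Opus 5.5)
The plan is to reduce the extension theorem to the existence of a single special $\Pi^0_1$ class $\mathcal R\subseteq\GL$ with the (global) Join Property. Copies of $\mathcal R$ will then be grafted onto $\mathcal Q$ densely enough that every nonempty relatively basic open subclass of the result contains a full copy. Finally, the main construction of $\mathcal R$ is carried out.

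\emph{Grafting.} Fix a computable tree $T$ with $[T]=\mathcal Q$ and a computable approximation $T_s\supseteq T_{s+1}$, $T=\bigcap_s T_s$. Arrange that the strings removed at stage $s$ have length $\le s$. Call $\tau$ \emph{dying at $s$} if $\tau\in T_s\setminus T_{s+1}$ and its parent lies in $T_{s+1}$. Put
\[
  \mathcal P=\mathcal Q\cup\bigcup_{s}\ \bigcup_{\tau\text{ dying at }s}\{\tau\concat Y: Y\in\mathcal R\}.
\]

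\emph{$\mathcal P$ is $\Pi^0_1$.} Its complement is c.e.\ open, since membership of a string in the tree of $\mathcal P$ is decidable from the $T_s$ and the tree of $\mathcal R$. The key point is closedness. Suppose $X$ is a limit of points $\tau_i\concat Y_i$ with the $\tau_i$ pairwise distinct. Then $|\tau_i|\to\infty$, so the death stages $s_i\to\infty$. Moreover, each prefix $X\restriction n$ lies in $T_{s_i}$ for all large $i$, hence $X\in\mathcal Q$.

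\emph{The other properties of $\mathcal P$.} Speciality and $\mathcal P\subseteq\GL$ follow because $\tau\concat Y\eqT Y$ and $(\tau\concat Y)'\eqT Y'$. For local JP, take $\sigma$ with $\mathcal P\cap\cyl\sigma\neq\varnothing$. Either $\sigma$ already extends some dying $\tau$, or $\cyl\sigma$ meets $\mathcal Q$. In the latter case some extension of $\sigma$ must eventually die, since otherwise $\cyl{\sigma}\cap\mathcal Q$ would be the whole cylinder $\cyl{\sigma}$ minus nothing and so contain a computable member. Either way $\cyl\sigma$ contains a whole copy $\tau\concat\mathcal R$ with $\sigma\preceq\tau$, or lies inside such a copy. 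In the second case I would instead graft recursively, using $\mathcal R$ itself built with local JP by the same trick. Then JP for $\mathcal P\cap\cyl\sigma$ is inherited: if $Y\in\mathcal R$ witnesses \eqref{eq:JP} for $A,Z$, so does $\tau\concat Y$.

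\emph{Main step: constructing $\mathcal R$.} This is the main obstacle. I would build a computable tree by a finite-injury construction. Its members are all $\GL$ because the jump is forced uniformly in $X\join 0'$. The $e$th jump requirement, above any node, is handled by the low-basis pruning step: it keeps only extensions on which $\Phi_e^X(e)$ either converges or never converges. This is done relative to $0'$-guesses, and the pruning is undone via injury when a guess changes. The pruning must preserve infinitely many splitting nodes above every surviving node, so that the class stays perfect and special (diagonalising against each $\Phi_e$ total).

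Given $A\geT Z\join0'$ with $Z\gtT0$, one then builds $B\in\mathcal R$ by Posner--Robinson-style finite extensions inside the tree. At step $n$ one first decides the next jump bit, which is possible uniformly from the pruned structure plus $0'$. One then codes $A(n)$ by the choice at a splitting node. The node is chosen as the least one at which $Z$ differs from a c.e.\ search, so that $B\join Z$ can recover the sequence of nodes, exactly as in the Posner--Robinson proof. This gives $A\eqT B\join Z\eqT B\join0'\eqT B'$.

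The delicate point is making the injuries of the jump-pruning compatible with the existence of $Z$-recognisable splitting nodes. I would first try to attach to each node a $\Pi^0_1$ family of ``coding splittings'' that is never pruned by higher-priority jump requirements.
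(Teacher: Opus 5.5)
Your grafting step is a sound reduction, but only if $\mathcal R$ is a special $\Pi^0_1$ class contained in $\GL$ with the \emph{local} Join Property. With $T$ computable, you may take the dying nodes to be exactly the exits $E$. Then $\mathcal Q\cup\bigcup_{\eta\in E}\eta\concat\mathcal R$ is the class of paths of a computable tree. Each nonempty $\mathcal P\cap\cyl\sigma$ either contains a full copy $\eta\concat\mathcal R$ or equals $\eta\concat(\mathcal R\cap\cyl{\sigma'})$, where $\sigma=\eta\concat\sigma'$.

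Local JP for $\mathcal R$ cannot be obtained ``by the same trick''. Grafting only transports local JP from $\mathcal R$ to the new class. Iterating it infinitely fails outright: any closed class containing all the recursively grafted copies contains $\eta\concat\eta\concat\eta\concat\cdots$ for a fixed exit $\eta$, which is computable. So you have shown only that the extension theorem follows from the local form of Theorem~\ref{thm:main}; the paper argues in the opposite direction. The whole weight therefore falls on your main step.

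That main step is not a proof. The point you call delicate is left open, and it is exactly where the paper's ideas are needed. Under injury, ``the c.e.\ search'' at a node is not well defined until the node's version is final, and your $B\join Z$ decoder has no way to know which version is final.

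The paper's construction is in effect a controlled form of the recursive grafting above, with $\mathcal Q$ grafted at its own exits. Before the children $a\concat(i,b)$ are seeded, the exit region $\cyl{p(a)\concat\eta_i}$ may be restricted, at most once per epoch of $a$, to a subcylinder $\cyl\rho\subseteq U_{|a|}$. The infinitely many exits (infinite because $\mathcal Q$ is special) serve as the Posner--Robinson choices, and the bit $b$ after the stem is the coding bit. A restriction at slot $(a,i)$ stays inside that single exit region and leaves the other exits at $a$ untouched. It is followed by fresh copies of $\mathcal Q$ at both children $\rho\concat b$, which restores noncomputability there. The odd requirements $U_{2e+1}$, together with these two surviving children, exclude computable complete paths such as $\eta\concat\eta\concat\cdots$.

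Injury is tamed by epochs and the permanent record. The sets $S_{a,t}$ are uniformly c.e. At the final epoch $t$ of $a$, the final epoch of each child is the identifier of the unique recorded event at $(a,t,i)$, or $t$ if there is none (Lemma~\ref{lem:decision}). Hence finality propagates along the path without any stabilisation information. Choosing $i_n$ with $i_n\in\widetilde Z\iff i_n\in S_{a_n,t_n}$ (Lemma~\ref{lem:agreement}) tells $B\join Z$ whether to search the record, while $B\join0'$ decides the same question directly. Once $\mathcal Q$ serves as the base in this way, your separate grafting step is unnecessary: the construction yields $\mathcal P\supseteq\mathcal Q$ directly.
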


A class $\mathcal Q$ satisfying the hypothesis exists by
\cite[Theorem 6.2, parts 2 and 4]{JS}. Only generalized lowness of
the individual members is needed here, not the stronger assertion
there concerning finite joins.

The construction uses a computable decreasing sequence of clopen
classes. Copies of the initial class $\mathcal Q$ are retained at
the limiting points of an infinitely branching system of choices.
 A finite-injury mechanism eventually decides, at each address and
exit, whether the relevant c.e.\ open requirement is met;  a permanent
historical record of the actions, indexed by epochs, later permits the
required decoding. The two decoding procedures are arranged so that the
same constructed path $B$ satisfies
\[
  A\eqT B\join Z\eqT B\join0'\eqT B'.
\]
The construction is local in the sense expressed in
Theorem~\ref{thm:main}: every nonempty relatively basic open part of
$\mathcal P$ again has the Join Property.

We prove the extension theorem in
Sections~\ref{sec:exits}--\ref{sec:coding}.
Section~\ref{sec:GL} establishes generalized lowness and specialness,
and Section~\ref{sec:consequences} derives the failure of pseudojump
inversion and completes the proof of the main theorem.
The appendix is independent of the construction: it shows that, for
any class with the Join Property, every jump-inversion fibre above
$0'$ represents countably infinitely many Turing degrees.

\section{Exits, addresses, and requirements}\label{sec:exits}

For a finite sequence $\sigma$, its length is denoted by $|\sigma|$,
and $\emptyseq$ denotes the empty sequence. All coordinates are
indexed starting at $0$. For finite sequences over the same alphabet,
$\sigma\preceq\tau$ means that $\sigma$ is a prefix of $\tau$, while
$\sigma\prec\tau$ means that it is a proper prefix. We use $\succeq$
for the reverse prefix relation. Two finite sequences are
\emph{incompatible} if neither is a prefix of the other.
For a finite or infinite sequence $\alpha$, the notation
$\alpha\upharpoonright n$ denotes its prefix of length $n$, whenever
defined.

The operation $\concat$ denotes concatenation. For binary strings,
$\sigma\concat b$ appends the single bit $b$, and $\sigma\concat X$
prefixes the real $X$ by $\sigma$. More generally, for a class
$\mathcal C\subseteq\Cantor$, put
\[
  \sigma\concat \mathcal C=\{\sigma\concat X:X\in \mathcal C\}.
\]
Thus $\sigma\concat \mathcal C$ is a prefixed copy of $\mathcal C$ inside
$\cyl{\sigma}$. We write $0^\omega$ for the constant zero real.

A set is \emph{clopen} if it is both open and closed; in Cantor
space, these are exactly the finite unions of cylinders.
A binary tree is a subset of $\Strings$ closed under prefixes.
For such a tree $T$, let
\[
  [T]=\{X\in\Cantor:(\forall n)\ X\upharpoonright n\in T\}
\]
be its class of infinite paths.

Fix $\mathcal Q$ as in Theorem~\ref{thm:extension}. Choose a computable
tree $T\subseteq\Strings$ with $\mathcal Q=[T]$ and $\emptyseq\in T$.
We do not assume that $T$ is pruned: some nodes of $T$ may lie on no
infinite path.

For a nonempty string $\eta$,
write $\eta^{-}=\eta\upharpoonright(|\eta|-1)$ for its immediate
predecessor. Let
\begin{equation}\label{eq:exits}
  E=\{\eta\in\Strings\setminus T:\eta^{-}\in T\}.
\end{equation}

The members of $E$ are called \emph{exits} from $T$.
A set of strings is \emph{prefix-free} if no two distinct members
are comparable under $\preceq$.

\begin{lemma}\label{lem:exits}
The set $E$ is computable, infinite, and prefix-free. Every real either
belongs to $\mathcal Q$ or extends exactly one member of $E$. If
$\mathcal Q\cap\cyl{\delta}\ne\varnothing$, some member of $E$ extends $\delta$.
\end{lemma}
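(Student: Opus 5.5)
We verify the four assertions of Lemma~\ref{lem:exits} in turn, working directly from the definition \eqref{eq:exits} and the fact that $T$ is a computable tree with $[T]=\mathcal Q$ and $\emptyseq\in T$.

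\emph{Computability.} Membership of $\eta$ in $E$ is decided by checking that $\eta\ne\emptyseq$, that $\eta\notin T$, and that $\eta^{-}\in T$. Each of these checks is computable because $T$ is.

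\emph{Prefix-freeness.} Suppose $\eta,\eta'\in E$ with $\eta\prec\eta'$. Then $\eta\preceq(\eta')^{-}$. Since $(\eta')^{-}\in T$ and $T$ is closed under prefixes, we get $\eta\in T$. This contradicts $\eta\in E$.

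\emph{Dichotomy for reals.} Let $X\notin\mathcal Q$. By the definition of $[T]$ there is some $n$ with $X\upharpoonright n\notin T$; take the least such $n$. Since $\emptyseq\in T$, we have $n\ge1$, and by minimality $X\upharpoonright(n-1)\in T$. Hence $X\upharpoonright n\in E$. Uniqueness follows from prefix-freeness, since any two initial segments of $X$ are comparable. Conversely, if $X\in\mathcal Q$, then every initial segment of $X$ lies in $T$, so $X$ extends no member of $E$. Thus the two alternatives are mutually exclusive, which justifies the phrase ``either \dots or exactly one''.

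\emph{Exits above $\delta$.} This is the step that uses specialness, and it is the main point requiring care. Suppose $\mathcal Q\cap\cyl{\delta}\ne\varnothing$. I claim that $\cyl{\delta}\not\subseteq\mathcal Q$. Indeed, otherwise $\delta\concat0^\omega\in\mathcal Q$, and this real is computable, contradicting that $\mathcal Q$ is special. So some $X\in\cyl{\delta}\setminus\mathcal Q$ exists. By the dichotomy, $X$ extends a unique exit $\eta=X\upharpoonright n$, with $n$ least such that $X\upharpoonright n\notin T$. We need $\eta\succeq\delta$, that is, $n\ge|\delta|$. For this, pick $Y\in\mathcal Q\cap\cyl{\delta}$. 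Then $\delta\prec Y$ and $Y\in[T]$ give $\delta\in T$, and every prefix of $\delta$ lies in $T$ too. So $X\upharpoonright m=\delta\upharpoonright m\in T$ for all $m\le|\delta|$, which forces $n>|\delta|$ and hence $\eta\succ\delta$.

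\emph{Infinitude.} Apply the previous step to longer and longer initial segments of a fixed $Y\in\mathcal Q$; such a $Y$ exists because a special class is nonempty. For each $k$ this gives an exit extending $Y\upharpoonright k$. Any fixed exit $\eta$ is incomparable with $Y$, because it is not in $T$ while every initial segment of $Y$ is. So $\eta$ fails to extend $Y\upharpoonright k$ once $k>|\eta|$. Hence infinitely many distinct exits arise, and $E$ is infinite.
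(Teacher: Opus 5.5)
Your proof is correct. For computability, prefix-freeness, the partition of $\Cantor\setminus\mathcal Q$ by first prefixes outside $T$, and the exit above $\delta$ (using $\delta\in T$ and the computable real $\delta\concat0^\omega\notin\mathcal Q$), it is the paper's argument.

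The only divergence is infinitude. The paper argues that if $E$ were finite, then $\mathcal Q=\Cantor\setminus\bigcup_{\eta\in E}\cyl{\eta}$ would be a nonempty clopen class and so would contain a computable real. You instead prove the local clause first and apply it to the prefixes $Y\upharpoonright k$ of a fixed $Y\in\mathcal Q$, which produces exits of unbounded length. Both arguments rest on the same fact: no cylinder $\cyl{\sigma}$ is contained in $\mathcal Q$, because $\sigma\concat0^\omega$ is computable. The paper's version is an independent one-line global argument. Yours makes infinitude a corollary of the local clause, so specialness is invoked only once.

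Your remark that a fixed exit is incomparable with $Y$ is not needed. The local clause already gives an exit strictly extending $Y\upharpoonright k$, hence of length greater than $k$, and that alone yields unbounded lengths.
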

\begin{proof}
Computability follows from that of $T$. A string in $E$ is the first
prefix outside $T$ of any real extending it, which proves prefix-freeness
and the asserted partition. If $E$ were finite, then
$\mathcal Q=\Cantor\setminus\bigcup_{\eta\in E}\cyl{\eta}$ would be a nonempty
clopen class and would contain a computable real, a contradiction.
Finally, if $\mathcal Q\cap\cyl{\delta}\ne\varnothing$, then $\delta\in T$.
The computable real $\delta\concat0^\omega$ does not belong to $\mathcal Q$.
Its first prefix outside $T$ therefore strictly extends $\delta$ and
belongs to $E$.
\end{proof}

Fix a computable enumeration without repetitions
$E=\{\eta_i:i\in\N\}$. Given an oracle for a real outside $\mathcal Q$, its
exit index $i$ can be recovered: find its shortest prefix outside $T$
and search the enumeration for that prefix.

Our \emph{addresses} are elements of
\[
  \Addr=(\N\times\{0,1\})^{<\N}.
\]
The length $|a|$ of an address is the number of pairs occurring in it.

The same prefix conventions
apply to addresses. We use fixed computable codings of binary strings,
addresses, and finite tuples by natural numbers whenever these objects
are used as inputs to algorithms.
We write $a\concat(k,b)$ for the address obtained by appending the
pair $(k,b)$ to $a$, where $k\in\N$ and $b\in\{0,1\}$.
Here $k$ records an exit index and $b$ is a binary choice.

A \emph{slot} is a pair $(a,k)$, where $a\in\Addr$ and $k\in\N$.
The two immediate child addresses associated with the slot $(a,k)$ are
\[
  a\concat(k,0)
  \qquad\text{and}\qquad
  a\concat(k,1).
\]
For example, if
\[
  a=\langle(4,0),(7,1)\rangle,
\]
then the slot $(a,3)$ has the two immediate child addresses
\[
  a\concat(3,0)
  \qquad\text{and}\qquad
  a\concat(3,1).
\]
Thus the natural number $3$ records the chosen exit, while the last
coordinate $0$ or $1$ records the subsequent binary choice.

A slot $(c,\ell)$ is \emph{below} $(a,k)$ if its address $c$
extends one of these two child addresses; equivalently,
\begin{equation}\label{eq:below}
  a\concat(k,b)\preceq c
  \quad\text{for some }b\in\{0,1\}.
\end{equation}
This relation is computable. In particular, the slot $(a,k)$ itself
is not below $(a,k)$.

A slot $(d,j)$ is a \emph{proper ancestor slot} of an address $a$ if
$a$ extends one of the two child addresses associated with $(d,j)$;
equivalently,
\[
  d\concat(j,b)\preceq a
  \quad\text{for some }b\in\{0,1\}.
\]
Each address $a$ will be represented in the ambient binary tree
$\Strings$ by a finite binary string, called its \emph{physical root}.
During the construction its value may change; its value at stage $s$
will be denoted by $p_s(a)$. We shall prove that $p_s(a)$ eventually
stabilises and denote its final value by $p(a)$. The binary-string
length $|p_s(a)|$ (and, eventually, $|p(a)|$) should not be confused
with the address length $|a|$.
Its role is to locate, inside Cantor space, the prefixed copy of $\mathcal Q$
associated with the address $a$.

We use the following uniformly c.e. sequence $(U_n)_{n\in\N}$ of
open sets, meaning that one algorithm, given $n$, enumerates cylinders
whose union is $U_n$:
\begin{align}
 U_{2e}&=\{X:\Phi_e^X(e)\downarrow\},\label{eq:jumpopens}\\
 U_{2e+1}&=\{X:\exists m\,[\varphi_e(m)\downarrow\in\{0,1\}
                         \ \&\ X(m)\ne\varphi_e(m)]\}.
                         \label{eq:diagonalopens}
\end{align}
Here $(\varphi_e)$ is a standard effective enumeration of partial
computable functions.
The notation
$\varphi_e(m)\downarrow\in\{0,1\}$ means that the computation
converges with a binary output.

Fix uniformly computable increasing
clopen approximations $U_{n,s}$ with $U_n=\bigcup_s U_{n,s}$.
Concretely, a finite list of cylinders describing $U_{n,s}$ can be
computed from $n,s$, and $U_{n,s}\subseteq U_{n,s+1}$.
Slots at addresses of length $n$ will decide
$U_n$ on their surviving paths. The even requirements control the jump;
the odd requirements exclude computable complete paths.

\section{The computable construction}\label{sec:construction}

The construction proceeds in stages. At each stage we keep a finite
table recording the actions which are still current, together with a
permanent record of all actions that have ever occurred. The current
table determines the locations of the copies of $\mathcal Q$ inside Cantor
space and hence the current clopen class $\mathcal P_s$. At each stage one
slot is visited.
If it acts, the construction restricts the corresponding exit region
to a chosen cylinder and reinitialises all descendant slots.

\subsection{The data at stage \texorpdfstring{$s$}{s}}

At the beginning of stage $s+1$ we have a finite partial table $D_s$.
Its domain $\dom(D_s)$ is the finite set of slots with a current
stored entry. An entry
\[
  D_s(a,i)=(\rho,u)
\]
records the stem $\rho$ selected by an action at stage $u>0$ which is
still current. Every action has its own stage number as identifier;
identifiers are never reused. An action at an ancestor deletes the
current entries in the slots below it.

There is also a permanent historical record of events
\begin{equation}\label{eq:event}
  (a,t,i,\rho,u).
\end{equation}
This says that $(a,i)$ acted at stage $u$, during epoch $t$ of $a$, and
selected $\rho$. Nothing is ever removed from this record. Its purpose
is distinct from that of $D_s$: an obsolete action disappears from the
current table but remains in the historical record.
From the current table $D_s$ we derive four kinds of data. The string
$p_s(a)$ is the current physical root of the address $a$;
$r_s(a,i)$ is the current exit root obtained by appending $\eta_i$
to $p_s(a)$; $g_s(a,i)$ is the current end of the stem attached to
that exit; and $\ep_s(a)$ records the current version, or epoch, of
the address.

Formally, these data are defined from $D_s$ by recursion on address
length. Set
\[
  p_s(\emptyseq)=\emptyseq,\qquad \ep_s(\emptyseq)=0,
\]
and put
\begin{align}
 r_s(a,i)&=p_s(a)\concat\eta_i,\label{eq:stage-root}\\
 g_s(a,i)&=
 \begin{cases}
   \rho,&D_s(a,i)=(\rho,u),\\
   r_s(a,i),&(a,i)\notin\dom(D_s),
 \end{cases}\label{eq:stage-stem}\\
 p_s(a\concat(i,b))&=g_s(a,i)\concat b,\label{eq:stage-child}\\
 \ep_s(a\concat(i,b))&=
 \begin{cases}
   u,&D_s(a,i)=(\rho,u),\\
   \ep_s(a),&(a,i)\notin\dom(D_s).
 \end{cases}\label{eq:stage-epoch}
\end{align}
Only the finitely many prefixes of $a$ are consulted to evaluate these
functions on a given input. We maintain that every current stem
extends its current exit root.
The values $p_s(a)$, $r_s(a,i)$, and $g_s(a,i)$ are finite binary
strings, whereas $\ep_s(a)\in\N$ identifies the current epoch.
This label need not equal the current stage $s$ and does not measure
the length of any of these strings.

The current clopen class is given by the finite table:
\begin{equation}\label{eq:table-class}
\mathcal P_s=\Cantor\setminus
       \bigcup_{(a,i)\in\dom(D_s)}
       \bigl(\cyl{r_s(a,i)}\setminus\cyl{g_s(a,i)}\bigr).
\end{equation}

Under the maintained stem condition, a current record at $(a,i)$
excludes from the exit cylinder $\cyl{r_s(a,i)}$ everything outside
the stem cylinder $\cyl{g_s(a,i)}$; if the two strings are equal,
this exclusion is empty.

A clopen set here is given by an explicit finite Boolean combination of
cylinders, so emptiness and inclusion are decidable.

\subsection{Initialisation and scheduling}

Initially $D_0$ and the historical record are empty, and $\mathcal P_0=\Cantor$.
We use a simple fair schedule: exactly one slot is visited at each
stage, and every slot is visited infinitely often.
Fix a computable bijection $\nu$ from $\N$ to the slots. At stage $s+1$
visit the slot
\[
  \nu(v_2(s+1)),
\]
where $v_2(m)$ is the exponent of $2$ in the positive integer $m$.
There is therefore at most one action per stage. Each individual
stage terminates: all tests performed at that stage are decidable,
and any search undertaken after a positive test is guaranteed to
succeed. We impose no a priori running-time bound.

\subsection{The action}

An action has two effects: it selects a cylinder contained in the
current exit region, and it reinitialises the descendant slots by
deleting their current entries. The corresponding historical events
are retained in the permanent record.

Suppose the visited slot is $(a,i)$. If it is already in $\dom(D_s)$,
do nothing. Otherwise put
\[
  n=|a|,\qquad t=\ep_s(a),\qquad r=r_s(a,i),
\]
and test whether the clopen set
\begin{equation}\label{eq:positive-test}
         C=\mathcal P_s\cap\cyl r\cap U_{n,s}
\end{equation}
is nonempty. If it is empty, again do nothing. If it is nonempty,
choose, in length-lexicographic order, a string $\rho\succeq r$ with
\begin{equation}\label{eq:available-cylinder}
  \cyl\rho\subseteq C.
\end{equation}
Such a string exists because $C$ is nonempty and clopen. The search is
computable, since inclusion of clopen sets is decidable.

Perform the following update. Delete from $D_s$ all entries in slots
below $(a,i)$, retain all other entries, and insert
\[
  D_{s+1}(a,i)=(\rho,s+1).
\]
Add $(a,t,i,\rho,s+1)$ to the permanent historical record. The intended
update to the clopen class is
\begin{equation}\label{eq:clopen-update}
 \mathcal P_{s+1}=(\mathcal P_s\setminus\cyl r)\cup\cyl\rho.
\end{equation}
Lemma~\ref{lem:invariants} below verifies that the new table describes
exactly this class. If no action is performed, both the table and the
clopen class remain unchanged.

Every action counts as an action even if $\rho=r$. In particular, it
still receives a new identifier and reinitialises the descendants.
The epoch is a version identifier, not a code for the physical string.

\section{Invariants, finite injury, and decisions}\label{sec:verification}

The verification has four steps. First we check the stage invariants
and the monotonicity of the clopen approximations. Next we prove finite
injury and obtain final roots, stems, and epochs. We then describe the
paths through the limit class. Finally, we show that each exit at a
final epoch decides the corresponding c.e. open requirement.

\subsection{Stage geometry and invariants}

\begin{lemma}[Stage invariants]\label{lem:invariants}
The construction is computable, and at every stage the following
properties hold:
\begin{enumerate}[label=\textup{(\roman*)},leftmargin=*]
\item every stored stem extends its current exit root;
\item the new table at an action gives precisely
      \eqref{eq:clopen-update}, and hence $\mathcal P_{s+1}\subseteq\mathcal P_s$;
\item for every $a\in\Addr$,
\begin{equation}\label{eq:stage-copies}
  p_s(a)\concat\mathcal Q\subseteq\mathcal P_s;
\end{equation}
\item for every $a\in\Addr$ and $i\in\N$, the exit $i$ at $a$
      has a surviving continuation:
\begin{equation}\label{eq:stage-nonempty-exit}
  \varnothing\ne p_s(a\concat(i,0))\concat \mathcal Q
       \subseteq\mathcal P_s\cap\cyl{r_s(a,i)}.
\end{equation}
\end{enumerate}
\end{lemma}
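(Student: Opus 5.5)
We argue by induction on the stage $s$, proving (i)--(iv) simultaneously. Computability is immediate from the description: $D_s$ is finite, the derived data $p_s,r_s,g_s,\ep_s$ are computed by recursion on address length consulting only prefixes, the test \eqref{eq:positive-test} is a decidable emptiness check on a clopen set, and the search for $\rho$ terminates because $C$ is nonempty clopen. At $s=0$ the table is empty, so $p_0(a)$ is obtained by successive concatenation of exits and bits; (i) and (ii) are vacuous, (iii) holds since $\mathcal P_0=\Cantor$, and (iv) holds because $\mathcal Q\ne\varnothing$ and $p_0(a\concat(i,0))=p_0(a)\concat\eta_i\concat0\succeq r_0(a,i)$.

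A preliminary geometric observation is needed for the inductive step. First, for every $a$ and $i$ we have $r_s(a,i)\preceq g_s(a,i)$ by (i), and hence $p_s(a)\prec p_s(a\concat(i,b))$. Second, since $\eta_i\notin T$, no member of $p_s(a)\concat\mathcal Q$ lies in $\cyl{r_s(a,i)}$. More generally, if two addresses are incomparable, then their physical roots are incomparable. To see this, take the first coordinate where the addresses differ. Either the exits differ there, and distinct members of the prefix-free set $E$ are incomparable by Lemma~\ref{lem:exits}, or the bits differ there. So the excised regions $\cyl{r_s(a,i)}\setminus\cyl{g_s(a,i)}$ meet a copy $p_s(c)\concat\mathcal Q$ only in one case. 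That case is $c$ below the slot $(a,i)$ and the copy contained in $\cyl{g_s(a,i)}$, so the copy is not excised. Proving this disjointness carefully gives (iii) directly from \eqref{eq:table-class}, at every stage and for every table satisfying (i). Then (iv) follows from (iii): $p_s(a\concat(i,0))\concat\mathcal Q$ is nonempty and contained in $\cyl{g_s(a,i)}\subseteq\cyl{r_s(a,i)}$.

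For the inductive step at an action at slot $(a,i)$ with chosen $\rho$, (i) holds for the new entry because $\rho\succeq r=r_s(a,i)$. For the other entries we argue as follows. Entries not below $(a,i)$ have addresses whose prefixes do not pass through $(a,i)$, so their roots $p_{s+1}$ are unchanged and their stems still extend them. Entries below $(a,i)$ are deleted. The main work is (ii), and I expect this to be the obstacle. We compare the excised regions of $D_{s+1}$ with those of $D_s$. Inside $\cyl r$, the old excised regions belong to $(a,i)$'s descendants (and to $(a,i)$ itself only if it were in the domain, which it is not). Every such region lies within $\cyl{g_s(a,i)}=\cyl r$. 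In the new table, all of them are gone and replaced by the single region $\cyl r\setminus\cyl\rho$. Outside $\cyl r$, no excised region changes: non-descendant entries keep their strings, and no descendant region leaves $\cyl r$. Hence $\mathcal P_{s+1}\cap\cyl r=\cyl\rho$ and $\mathcal P_{s+1}\setminus\cyl r=\mathcal P_s\setminus\cyl r$, which is \eqref{eq:clopen-update}. The inclusion $\mathcal P_{s+1}\subseteq\mathcal P_s$ then uses $\cyl\rho\subseteq C\subseteq\mathcal P_s$ from \eqref{eq:available-cylinder}. The delicate point is to show that, for a slot not below $(a,i)$, its exit root is either disjoint from $\cyl r$ or contains it as an ancestor region, so that it is unaffected. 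This is again the incomparability argument from the previous paragraph. Finally, (iii) and (iv) at stage $s+1$ follow from the general observation applied to $D_{s+1}$, which satisfies (i). When no action occurs, nothing changes.
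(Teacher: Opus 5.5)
Your proposal is correct and follows essentially the same proof as the paper. Both argue by induction on stages, use the geometric fact that incompatible addresses and distinct exits give disjoint root cylinders, show that every copy $p_s(c)\concat\mathcal Q$ survives for any table satisfying the stem condition, and read off (iv) from (iii). The only variation is in (ii): you localise (retained exclusions miss $\cyl r$ and deleted ones lie inside it, so $\mathcal P_{s+1}$ agrees with $\mathcal P_s$ off $\cyl r$ and equals $\cyl\rho$ on it), whereas the paper shows the deleted exclusions are absorbed by $\cyl r\setminus\cyl\rho$ using $\cyl\rho\subseteq\mathcal P_s$; both routes are valid.
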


\begin{proof}
We argue by induction on the stages. Initially $D_0=\varnothing$
and $\mathcal P_0=\Cantor$, so the stem condition and the two assertions
about copies of $\mathcal Q$ hold.

\smallskip
\noindent\emph{Geometric preliminaries.}
First observe some geometric consequences of the stem condition.
For every address $a$, exit $i$, and bit $b<2$,
\[
  p_s(a)\concat\eta_i
  =r_s(a,i)
  \preceq g_s(a,i)
  \prec p_s(a\concat(i,b)).
\]
Consequently, if $a\concat(i,b)\preceq c$, then
$g_s(a,i)\concat b\preceq p_s(c)$. The two children of a slot
have incompatible roots. Distinct exits at the same address
also have incompatible roots, because $E$ is prefix-free.
It follows, by considering the first pair at which two
addresses differ, that incompatible addresses have disjoint
root cylinders.

\smallskip
\noindent\emph{Effect of an action.}
Suppose an action is performed at $(a,i)$ at stage $s+1$.
By the action rule, $(a,i)\notin\dom(D_s)$. Put
$r=r_s(a,i)$, and let $\rho\succeq r$ be the chosen string, so
\[
  \cyl{\rho}\subseteq
 \mathcal P_s\cap\cyl r\cap U_{|a|,s}.
\]
Partition the old records into those retained and those deleted.
The point is to show that deleting the descendant records does not
restore any real which had already been removed from $\mathcal P_s$.
All exclusions considered in this paragraph are evaluated
using the old physical roots and stems.

A retained record is either at an ancestor slot traversed on the way
to $a$, or lies outside the descendant region of $(a,i)$. In the
latter case its root is incompatible with the exit root $r$.
Its root depends only on records along its own
address, none of which is changed by this action. Thus its
physical root and its stored stem remain unchanged. Its
excluded set is disjoint from $\cyl r$: in the ancestor case,
$\cyl r$ is contained in its retained stem cylinder; in the
incompatible case, the two root cylinders are disjoint.

Every deleted record is below $(a,i)$, so its old root
cylinder, and hence its excluded set, is contained in
$\cyl r$. Its excluded set is also disjoint from
$\cyl{\rho}$, since $\cyl{\rho}\subseteq\mathcal P_s$.
Therefore every deleted exclusion is contained in
\[
  H=\cyl r\setminus\cyl{\rho}.
\]

The new table satisfies the stem condition. Indeed, all
retained records have unchanged roots and stems. The root of
$(a,i)$ itself is still $r$, since its root depends only on
proper ancestors, and its new stem is $\rho\succeq r$.
All old records below $(a,i)$ have been deleted.

Let $F$ be the union of the exclusions of the retained
records, and let $G$ be the union of the exclusions of the
deleted records. Then
\[
  \Cantor\setminus\mathcal P_s=F\cup G
  \qquad\text{and}\qquad
  G\subseteq H.
\]
Thus every exclusion which disappears from the table is already
absorbed by the new exclusion $H$.
The new table has exactly the retained exclusions together
with the new exclusion $H$. Hence it describes
\[
\begin{aligned}
  \Cantor\setminus(F\cup H)
  &=\Cantor\setminus(F\cup G\cup H)\\
  &=\mathcal P_s\setminus H\\
  &=(\mathcal P_s\setminus\cyl r)\cup\cyl{\rho},
\end{aligned}
\]
where the last equality uses $\cyl{\rho}\subseteq\mathcal P_s$.
This proves \eqref{eq:clopen-update} and monotonicity.
If no action is performed, the table and the class are
unchanged.

\smallskip
\noindent\emph{Survival of the copies of $\mathcal Q$.}
We next verify the copy assertions for any stage $t$ whose
table satisfies the stem condition.
Fix an address $c$ and
a stored slot $(d,j)$.

If $d\concat(j,b)\preceq c$ for some $b<2$, then
\[
  \cyl{p_t(c)}\subseteq\cyl{g_t(d,j)},
\]
so the exclusion at $(d,j)$ does not meet
$p_t(c)\concat \mathcal Q$.

If $d=c$, its exclusion is contained in
$\cyl{p_t(c)\concat\eta_j}$, which is disjoint from
$p_t(c)\concat \mathcal Q$ because $\eta_j\notin T$ and $\mathcal Q=[T]$.
If $c\prec d$, let $(k,b)$ be the first pair of $d$ after
$c$. The exclusion at $(d,j)$ is then contained in
$\cyl{p_t(c)\concat\eta_k}$, and is again disjoint from
$p_t(c)\concat \mathcal Q$.

In all remaining cases, the slot $(d,j)$ lies in a region
incompatible with the address $c$, so its root cylinder is
disjoint from $\cyl{p_t(c)}$.
Thus no exclusion in the table meets $p_t(c)\concat \mathcal Q$.
Formula \eqref{eq:table-class} proves
\eqref{eq:stage-copies}.

Taking $c=a\concat(i,0)$ gives
\eqref{eq:stage-nonempty-exit}, since
\[
  p_t(c)=g_t(a,i)\concat0\succeq r_t(a,i)
\]
and $\mathcal Q$ is nonempty.

\smallskip
\noindent\emph{Effectivity.}
Finally, all stage data are computable. The table is finite,
roots and epochs are evaluated by recursion on finite
addresses, and the relation specifying which records to
delete is computable. The set tested at an action is an
explicitly presented clopen set, so its emptiness and the
inclusion of any proposed cylinder are decidable. If the
test is positive, this clopen set contains a cylinder, so
the length-lexicographic search for $\rho$ terminates.
The finite table update and the addition to the historical
record are effective. This completes the induction.
\end{proof}

\subsection{Finite injury and final values}
The epochs serve as finite-injury version numbers: an address receives
a new epoch exactly when a proper ancestor slot acts, thereby
reinitialising it.

\begin{lemma}[Epochs and stabilisation]\label{lem:stabilisation}
For each address $a$, the epoch $\ep_s(a)$ changes only when a proper
ancestor slot acts, and then changes to that action's new identifier.
It never returns to an earlier value. Each slot acts at most once in
each epoch of its address. Every address has only finitely many epochs.
In particular, the limits
\begin{equation}\label{eq:limits}
 p(a)=\lim_s p_s(a),\qquad
 \ep(a)=\lim_s\ep_s(a),\qquad
 g(a,i)=\lim_s g_s(a,i)
\end{equation}
exist for all $a,i$.
\end{lemma}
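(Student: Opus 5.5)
We argue by induction on the address length $|a|$, establishing the individual claims of Lemma~\ref{lem:stabilisation} in the order stated.

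\emph{When epochs change.} By \eqref{eq:stage-epoch}, $\ep_s(a)$ is determined by the deepest proper ancestor slot $(d,j)$ of $a$ with $(d,j)\in\dom(D_s)$. It equals the identifier stored there, or $0$ if there is no such slot. Suppose an action occurs at stage $s+1$ at a slot $(c,k)$ that is not a proper ancestor slot of $a$. Then $(c,k)$ is either below some proper ancestor slot of $a$ but off the path to $a$, or incomparable with $a$, or equal to $(a,i)$ or below it. In each case the deletions remove only entries in slots below $(c,k)$, and none of these is a proper ancestor slot of $a$. The inserted entry is also not such a slot. So the ancestor entries of $a$ are untouched and $\ep_{s+1}(a)=\ep_s(a)$.

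Now suppose $(c,k)$ is a proper ancestor slot of $a$. The action deletes every stored proper ancestor slot of $a$ lying below $(c,k)$, and it inserts $(c,k)$ with identifier $s+1$. Hence $(c,k)$ becomes the deepest stored ancestor slot, and $\ep_{s+1}(a)=s+1$. Since identifiers are the stage numbers, the new value is strictly larger than every earlier value, so the epoch never returns to an earlier value.

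\emph{At most one action per epoch.} Fix a slot $(a,i)$. Once it acts, $(a,i)\in\dom(D)$, and the rule then forbids further action. Its entry can be deleted only by an action at a slot above which $(a,i)$ lies, that is, at a proper ancestor slot of $a$. By the first step, such an action changes the epoch of $a$. So within one epoch of $a$ the slot $(a,i)$ acts at most once.

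\emph{Finitely many epochs.} We prove by induction on $|a|$ that $a$ has finitely many epochs and that $p_s(a)$ stabilises. For $a=\emptyseq$ the epoch is constantly $0$ and the root is constantly $\emptyseq$. For $a=c\concat(j,b)$, the proper ancestor slots of $a$ are $(c,j)$ together with the proper ancestor slots of $c$. By the induction hypothesis, $c$ has finitely many epochs. By the previous step, $(c,j)$ acts at most once per epoch of $c$. The remaining ancestor slots are ancestor slots of $c$, and by the first step applied to $c$, each of their actions is one of the finitely many epoch changes of $c$. So only finitely many actions ever occur at proper ancestor slots of $a$, and by the first step $a$ has finitely many epochs.

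\emph{Stabilisation.} After the last ancestor action, the entries $D_s(d,j)$ at all proper ancestor slots of $a$ are fixed. By the recursion \eqref{eq:stage-root}--\eqref{eq:stage-epoch}, which consults only these entries, $p_s(a)$ and $\ep_s(a)$ are then constant. For $g_s(a,i)$, we use that after $a$ reaches its final epoch, $(a,i)$ can act at most once more, and its entry is never deleted afterwards. So $g_s(a,i)$ changes at most once more and then stays fixed. This gives the limits in \eqref{eq:limits}.

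The main obstacle is the first step, namely the careful case analysis showing that only actions at proper ancestor slots alter the deepest stored ancestor entry. This relies on the precise definition of ``below'' in \eqref{eq:below} and on the fact that deletions are confined to slots below the acting slot.
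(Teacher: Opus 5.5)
Your proof is correct and follows essentially the same route as the paper. You use the epoch recursion to show that only actions at proper ancestor slots change $\ep_s(a)$, always to a fresh identifier; you deduce that a slot acts at most once per epoch, obtain finitely many epochs by induction on $|a|$, and get the limits because roots and epochs depend only on entries at proper ancestor slots. (Your case list in the first step is a little loose, e.g.\ an off-path root slot $(\emptyseq,k)$ fits none of the listed descriptions, but the general fact you invoke, that no slot below a non-ancestor slot is a proper ancestor slot of $a$, covers every case.)
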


\begin{proof}
\noindent\emph{Changes of epoch.}
When $(a,i)$ acts at stage $u$, every table entry below it is deleted.
The epoch recursion therefore gives epoch $u$ to all its descendant
addresses. This identifier exceeds all earlier identifiers. Epochs
outside this descendant region are unchanged. In particular an action
at $(a,i)$ does not change the epoch of $a$ itself. Physical roots can
change only in the same descendant region. Thus a root is constant
throughout a given epoch of its address, even though an action need not
change its physical string.

\smallskip
\noindent\emph{Finite injury.}
Once a slot has acted, its entry remains until a proper ancestor acts.
Such an action changes the epoch of its address. Hence the slot acts
at most once per epoch.

The root address has a single epoch, namely $0$, so each of its
slots acts at most once.
Inductively, if $a$ has finitely many epochs,
then a fixed slot $(a,i)$ acts finitely often. Its actions, together
with the finitely many changes of the epoch of $a$, account for all
reinitialisations of each child $a\concat(i,b)$. The child therefore
has finitely many epochs as well.
This completes the induction on address length.

\smallskip
\noindent\emph{Final values.}
For each fixed address $a$, its epoch is eventually constant.
Since its physical root is constant throughout each epoch,
$p_s(a)$ is eventually constant as well. Fix also an exit $i$.
During the final epoch of $a$, the exit root
$r_s(a,i)=p_s(a)\concat\eta_i$ is fixed, and the slot $(a,i)$
acts at most once. If it acts, its new record can never
subsequently be deleted, since such a deletion would require
a further change of the epoch of $a$. Thus $g_s(a,i)$ is
eventually constant. This proves all the limits in
\eqref{eq:limits}.

The stabilisation is pointwise: no simultaneous stabilisation
of all addresses or slots at a level is asserted.

\end{proof}

The notation without a stage subscript, namely $p(a)$, $\ep(a)$,
and $g(a,i)$, will henceforth denote these final values.
The limits in \eqref{eq:limits} mean eventual constancy at each fixed
input; they are not unions of the stage approximations.

\subsection{The limit class and its paths}

We now pass to the limit and define
\begin{equation}\label{eq:limit-class}
 \mathcal P=\bigcap_s\mathcal P_s.
\end{equation}

It is a $\Pi^0_1$ class, since the complements of the uniformly
computable clopen sets $\mathcal P_s$ enumerate an effectively open set.
We first verify that every final copy of $\mathcal Q$ survives, and then use
these copies to describe all paths through $\mathcal P$.
\begin{lemma}[Surviving copies and paths]\label{lem:paths}
For every address $a$,
\begin{equation}\label{eq:limit-copies}
  p(a)\concat\mathcal Q\subseteq \mathcal P.
\end{equation}
In particular $\mathcal Q\subseteq\mathcal P$ and $\mathcal P$ is nonempty. Every $B\in\mathcal P$ has
exactly one of the following two descriptions under the
parsing rule below:
\begin{enumerate}[label=\textup{(\roman*)},leftmargin=*]
\item the procedure stops at an address $a$, and
      $B=p(a)\concat R$ for some $R\in \mathcal Q$;
\item the procedure continues at every level, giving addresses
      $a_0=\emptyseq$ and $a_{n+1}=a_n\concat(i_n,b_n)$ with
      $p(a_n)\prec B$ for all $n$.
\end{enumerate}
We call these paths \emph{stopped} and \emph{complete}, respectively.
This is a structural description; decidability of membership in $\mathcal Q$ is
not assumed. Conversely, any infinite chain of addresses of the form in
\textup{(ii)} gives a member of $\mathcal P$ by taking the union of its final
roots.
\end{lemma}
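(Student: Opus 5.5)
The plan is to prove the three assertions in order: survival of the final copies, the parsing of an arbitrary $B\in\mathcal P$, and then the converse for infinite chains. Throughout I rely on Lemma~\ref{lem:stabilisation} and Lemma~\ref{lem:invariants}(iii).

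\emph{Surviving copies.} Fix $a$. The value $p_s(a)$ depends only on the table entries at the finitely many slots $(d,j)$ with $d\concat(j,b)\preceq a$. By Lemma~\ref{lem:stabilisation}, $p_s(a)$ is eventually constant, so there is $s_0$ with $p_s(a)=p(a)$ for all $s\ge s_0$. Lemma~\ref{lem:invariants}(iii) then gives $p(a)\concat\mathcal Q\subseteq\mathcal P_s$ for every $s\ge s_0$. Since the $\mathcal P_s$ decrease, the same inclusion holds for $s<s_0$. Hence $p(a)\concat\mathcal Q\subseteq\mathcal P$. Taking $a=\emptyseq$, where $p(\emptyseq)=\emptyseq$, gives $\mathcal Q\subseteq\mathcal P$, and nonemptiness follows because $\mathcal Q$ is special.

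\emph{Parsing rule.} Given $B\in\mathcal P$, the procedure maintains an address $a$ with $p(a)\prec B$, starting from $a=\emptyseq$. Write $B=p(a)\concat R$.
\begin{enumerate}[label=\textup{(\alph*)},leftmargin=*]
\item If $R\in\mathcal Q$, the procedure stops.
\item Otherwise Lemma~\ref{lem:exits} gives a unique $i$ with $\eta_i\prec R$, so $r(a,i)=p(a)\concat\eta_i\prec B$.
\end{enumerate}
In case (b) I claim $g(a,i)\prec B$. If $(a,i)$ has no final record, then $g(a,i)=r(a,i)$ and there is nothing to prove. If it has one, that record is present in $D_s$ for all large $s$, with root $r(a,i)$ and stem $g(a,i)$. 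Then \eqref{eq:table-class} removes $\cyl{r(a,i)}\setminus\cyl{g(a,i)}$ from $\mathcal P_s$. Since $B\in\mathcal P_s$ and $B\in\cyl{r(a,i)}$, it follows that $B\in\cyl{g(a,i)}$.

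Now let $b=B(|g(a,i)|)$. Then $p(a\concat(i,b))=g(a,i)\concat b\prec B$ by \eqref{eq:stage-child} in the limit, and the procedure continues with $a\concat(i,b)$. Every choice made is forced: $i$ by prefix-freeness of $E$, and $b$ by the bit of $B$. So the run is unique, and it either halts (description (i)) or runs forever (description (ii)), never both. The step needing most care is this use of the final record. One must check that the exclusion at $(a,i)$ is eventually present with the final root and stem, which is exactly the pointwise stabilisation of $p$ and $g$.

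\emph{Converse.} Let $(a_n)$ be an infinite chain with $a_{n+1}=a_n\concat(i_n,b_n)$. The final roots strictly increase, since
\[
p(a_{n+1})=g(a_n,i_n)\concat b_n\succeq p(a_n)\concat\eta_{i_n}\concat b_n,
\]
using the stem condition in the limit. Hence their union is a real $B$. By the first part, each cylinder $\cyl{p(a_n)}$ meets $\mathcal P$, because it contains $p(a_n)\concat\mathcal Q\neq\varnothing$. Thus $B$ is a limit of members of $\mathcal P$. Since $\mathcal P$ is an intersection of clopen sets and therefore closed, $B\in\mathcal P$. No decidability of $\mathcal Q$ is used anywhere; the parsing is purely a structural description.
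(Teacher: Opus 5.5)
Your proof follows the paper's argument step for step: stabilisation of $p_s(a)$ plus monotonicity for the surviving copies, the containment $\mathcal P\cap\cyl{p(a)\concat\eta_i}\subseteq\cyl{g(a,i)}$ read off the table at a late stage for the parsing, and closedness of $\mathcal P$ for the converse.

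One displayed relation is false as written, namely $g(a_n,i_n)\concat b_n\succeq p(a_n)\concat\eta_{i_n}\concat b_n$. When the stem properly extends the exit root, the bit following $p(a_n)\concat\eta_{i_n}$ is a bit of the stem rather than $b_n$. Use $p(a_n)\concat\eta_{i_n}\preceq g(a_n,i_n)\prec p(a_{n+1})$ instead, which still gives strict growth of the roots.

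It is also worth adding, as the paper does, that the parsing of the union $B$ follows the given chain and never stops. Its tail after $p(a_n)$ extends $\eta_{i_n}$, and its bit after $g(a_n,i_n)$ is $b_n$. Proposition~\ref{prop:coding} later relies on $B$ being complete.
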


\begin{proof}
\noindent\emph{Survival of the final copies.}
Fix an address $a$. By Lemma~\ref{lem:stabilisation}, there is a
stage $s_a$ such that $p_s(a)=p(a)$ for every $s\geq s_a$.
Then \eqref{eq:stage-copies} gives
\[
  p(a)\concat\mathcal Q\subseteq\mathcal P_s
  \qquad(s\geq s_a).
\]
For $s<s_a$, monotonicity gives $\mathcal P_{s_a}\subseteq\mathcal P_s$, so the
same inclusion holds. Hence
\[
  p(a)\concat\mathcal Q\subseteq\bigcap_s\mathcal P_s=\mathcal P,
\]
proving \eqref{eq:limit-copies}. Since
$p(\emptyseq)=\emptyseq$ and $\mathcal Q$ is nonempty, we also have
$\mathcal Q\subseteq\mathcal P$ and $\mathcal P\ne\varnothing$.

\smallskip
\noindent\emph{Parsing a member of $\mathcal P$.}
By the stage recursions, the stem condition, and pointwise
stabilisation, for every $a,i$, and $b<2$,
\[
  p(a)\concat\eta_i\preceq g(a,i),
  \qquad
  p(a\concat(i,b))=g(a,i)\concat b.
\]
We also claim that
\begin{equation}\label{eq:paths-exit-containment}
  \mathcal P\cap\cyl{p(a)\concat\eta_i}\subseteq\cyl{g(a,i)}
  \qquad\text{for every }a,i.
\end{equation}
To see this, choose $s$ sufficiently large that
$p_s(a)=p(a)$ and $g_s(a,i)=g(a,i)$. If
$(a,i)\notin\dom(D_s)$, then
$g(a,i)=r_s(a,i)=p(a)\concat\eta_i$, and the claim is
immediate. Otherwise the table excludes
\[
  \cyl{r_s(a,i)}\setminus\cyl{g_s(a,i)}
\]
from $\mathcal P_s$. Since $\mathcal P\subseteq\mathcal P_s$,
\eqref{eq:paths-exit-containment} follows in this case as well.

Now let $B\in\mathcal P$ and start the parsing at $\emptyseq$.
Suppose it has reached $a$ with $p(a)\prec B$, and write
$B=p(a)\concat R_a$. If $R_a\in \mathcal Q$, the parsing stops.
Otherwise Lemma~\ref{lem:exits} gives a unique $i$ such that
$\eta_i\prec R_a$. By \eqref{eq:paths-exit-containment},
$B$ extends $g(a,i)$. The bit
\[
  b=B(|g(a,i)|)
\]
is therefore defined, and
\[
  p(a\concat(i,b))=g(a,i)\concat b\prec B.
\]
Thus the next address is uniquely determined. This establishes
exhaustiveness and uniqueness of the parsing description.
The parsing is understood structurally; no decision procedure
for membership in $\mathcal Q$ is being asserted.

\smallskip
\noindent\emph{From an infinite address chain to a path.}
Conversely, let $a_0=\emptyseq$ and
$a_{n+1}=a_n\concat(i_n,b_n)$ be any infinite address chain.
The final-root identities give
\[
  p(a_n)\concat\eta_{i_n}
  \preceq g(a_n,i_n)
  \prec p(a_{n+1}).
\]
Since every exit $\eta_i$ is nonempty,
\[
  |p(a_{n+1})|
  \geq |p(a_n)|+|\eta_{i_n}|+1
  \geq |p(a_n)|+2.
\]
The roots therefore have an infinite union
\[
  B=\bigcup_n p(a_n).
\]
Fix $R\in \mathcal Q$. By \eqref{eq:limit-copies}, every
$p(a_n)\concat R$ belongs to $\mathcal P$. These reals converge to $B$,
because they agree with $B$ on the increasingly long prefixes
$p(a_n)$. The class $\mathcal P$ is closed, so $B\in\mathcal P$.

For every $n$, the tail of $B$ after $p(a_n)$ extends
$\eta_{i_n}$ and hence does not belong to $\mathcal Q$. Moreover, the
bit of $B$ immediately after $g(a_n,i_n)$ is $b_n$.
Its unique parsing therefore follows the specified chain
and never stops.
\end{proof}

\subsection{Event sets and final decisions}

For a fixed address and epoch, we now record which exits actually
acted during that epoch.

\begin{definition}[The event sets]\label{def:eventsets}
For $a\in\Addr$ and $t\in\N$, let
\begin{equation}\label{eq:eventsets}
 S_{a,t}=\{i:\exists\rho,u\ (a,t,i,\rho,u)
                         \text{ occurs in the historical record}\}.
\end{equation}
\end{definition}

The family $(S_{a,t})_{a,t}$ is uniformly c.e., including when $t$ is
not an epoch actually attained by $a$. By the $s$-$m$-$n$ theorem,
fix a computable function $q$ such that $S_{a,t}=W_{q(a,t)}$
for all $a,t$. By Lemma~\ref{lem:stabilisation}, for a fixed
triple $(a,t,i)$ there is at most one recorded action.
Once $i$ is enumerated into $S_{a,t}$, the corresponding stem and
identifier can be recovered effectively from the historical record.
Thus $S_{a,t}$ records the positive decisions made at address $a$
during epoch $t$.

At a final epoch, no further injury by a proper ancestor slot is possible.
Consequently, each exit eventually makes a permanent positive or negative
decision about the corresponding open set $U_{|a|}$.

\begin{lemma}[Decision at a final epoch]\label{lem:decision}
Fix an address $a$ and an exit $i\in\N$. Put $t=\ep(a)$ and
$r=p(a)\concat\eta_i$.
Then
\begin{align}
 i\in S_{a,t}&\ \Longrightarrow\ \mathcal P\cap\cyl r\subseteq U_{|a|},
                      \label{eq:positive-decision}\\
 i\notin S_{a,t}&\ \Longrightarrow\ \mathcal P\cap\cyl r\cap U_{|a|}
                      =\varnothing.\label{eq:negative-decision}
\end{align}
If $i\in S_{a,t}$ and its event has stem $\rho$ and identifier $u$,
then $g(a,i)=\rho$ and $\ep(a\concat(i,b))=u$ for $b<2$.
If $i\notin S_{a,t}$, then $g(a,i)=r$ and
$\ep(a\concat(i,b))=t$ for $b<2$.
\end{lemma}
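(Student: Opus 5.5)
Fix a stage $s_0$ after which $\ep_s(a)=t$ and $p_s(a)=p(a)$; this stage exists by Lemma~\ref{lem:stabilisation}. From $s_0$ on, the exit root of $(a,i)$ stays equal to $r$. Every event $(a,t,i,\rho,u)$ is produced during epoch $t$ of $a$, and Lemma~\ref{lem:stabilisation} says the slot acts at most once in that epoch. So there is at most one such event. The proof splits according to whether this event exists.

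\emph{Positive case.} Suppose $i\in S_{a,t}$, with event $(a,t,i,\rho,u)$. The action took place at stage $u$ with $\ep_{u-1}(a)=t$. Epochs never return to earlier values, and $t$ is the final epoch, so the epoch of $a$ does not change at any stage $\geq u-1$. Since only a proper ancestor action could delete the entry $D(a,i)=(\rho,u)$, the entry is permanent. Hence $g(a,i)=\rho$, and by \eqref{eq:stage-epoch}, $\ep(a\concat(i,b))=u$. The root at the time of action was $r_{u-1}(a,i)=p(a)\concat\eta_i=r$, because the root of $a$ is constant throughout an epoch. By \eqref{eq:available-cylinder}, $\cyl\rho\subseteq U_{|a|,u-1}\subseteq U_{|a|}$. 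Finally, \eqref{eq:paths-exit-containment} gives $\mathcal P\cap\cyl r\subseteq\cyl{g(a,i)}=\cyl\rho\subseteq U_{|a|}$, which is \eqref{eq:positive-decision}.

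\emph{Negative case.} Suppose $i\notin S_{a,t}$. Then $(a,i)$ never acts during epoch $t$. If $(a,i)$ had been in the domain of $D_s$ at some stage $s\ge s_0$, its entry would have been created by an action during the current epoch: any entry from an earlier epoch was deleted when the epoch of $a$ changed, since that change was an ancestor action. So $(a,i)\notin\dom(D_s)$ for all $s\geq s_0$. Therefore $g(a,i)=r$, and by \eqref{eq:stage-epoch}, $\ep(a\concat(i,b))=\ep(a)=t$.

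For \eqref{eq:negative-decision}, suppose toward a contradiction that $X\in\mathcal P\cap\cyl r\cap U_{|a|}$. Some cylinder $\cyl\tau\ni X$ lies in $U_{|a|,s_1}$ for some $s_1$. Since the schedule visits $(a,i)$ infinitely often, it is visited at some stage $s+1$ with $s\ge\max(s_0,s_1)$. At that stage $(a,i)\notin\dom(D_s)$. The test set \eqref{eq:positive-test} contains $X$, because $X\in\mathcal P\subseteq\mathcal P_s$, $X\in\cyl r$ with $r=r_s(a,i)$, and $X\in U_{|a|,s}$ by monotonicity of the approximations. So the test is positive and the slot acts during epoch $t$. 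That action records $i$ in $S_{a,t}$, a contradiction.

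The main obstacle is the bookkeeping in the negative case. We must rule out a stale entry of $(a,i)$ from an earlier epoch surviving into epoch $t$. This relies on the fact from the proof of Lemma~\ref{lem:stabilisation} that an ancestor action both deletes descendant entries and changes the epoch. We must also make sure the positive test is evaluated on $\mathcal P_s$ rather than on $\mathcal P$; this is harmless, since $\mathcal P\subseteq\mathcal P_s$.
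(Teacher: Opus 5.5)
Your proof is correct and follows essentially the same route as the paper. You fix the final epoch and note that stale entries at $(a,i)$ are deleted by the ancestor action that ends their epoch. In the positive case the unique event's entry is permanent; in the negative case the fair schedule forces an action once a point of $\mathcal P\cap\cyl r\cap U_{|a|}$ has been enumerated, a contradiction. The only cosmetic difference is that you get $\mathcal P\cap\cyl r\subseteq\cyl\rho$ from \eqref{eq:paths-exit-containment}, whereas the paper reads it off directly from the update \eqref{eq:clopen-update} and monotonicity.
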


\begin{proof}
\noindent\emph{Beginning of the final epoch.}
Let $s_0$ be the first stage at which $a$ has its final epoch
$t=\ep(a)$. By Lemma~\ref{lem:stabilisation}, for every
$s\geq s_0$,
\[
  \ep_s(a)=t,\qquad p_s(a)=p(a),\qquad r_s(a,i)=r.
\]
No proper ancestor slot of $a$ acts after stage $s_0$.

At stage $s_0$, there is no current entry at any slot $(a,j)$.
Indeed, if $s_0=0$, the initial table is empty. If $s_0>0$,
the action introducing this epoch is at a proper ancestor
slot of $a$ and deletes all entries at slots $(a,j)$.
Consequently, every entry subsequently present at $(a,i)$
comes from an action recorded in $S_{a,t}$, and such an
entry is never subsequently cancelled.

\smallskip
\noindent\emph{Positive decision.}
Suppose first that $i\in S_{a,t}$, and let
$(a,t,i,\rho,u)$ be its unique event. At stage $u$, the
exit root is $r$, and the action chooses
\[
  \cyl{\rho}\subseteq
 \mathcal P_{u-1}\cap\cyl r\cap U_{|a|,u-1}.
\]
By \eqref{eq:clopen-update} and monotonicity,
\[
  \mathcal P\cap\cyl r\subseteq\cyl{\rho}\subseteq U_{|a|}.
\]
This proves \eqref{eq:positive-decision}.
The record $D_s(a,i)=(\rho,u)$ remains present for all
$s\geq u$. The stem and epoch recursions therefore give
\[
  g(a,i)=\rho,\qquad
  \ep(a\concat(i,b))=u
  \quad(b<2).
\]

\smallskip
\noindent\emph{Negative decision.}
The negative case requires an argument: the absence of an action
during the final epoch must be shown to imply that
$\mathcal P\cap\cyl r$ is disjoint from $U_{|a|}$.

Suppose that $i\notin S_{a,t}$. There is no current
entry at $(a,i)$ at any stage $s\geq s_0$.
Hence the
default clauses of the recursions give
\[
  g_s(a,i)=r,\qquad
  \ep_s(a\concat(i,b))=t
  \quad(s\geq s_0,\ b<2).
\]
This proves the remaining assertions about the final stem
and the child epochs.

To prove \eqref{eq:negative-decision}, suppose for a
contradiction that
\[
  B\in \mathcal P\cap\cyl r\cap U_{|a|}.
\]
Since the approximations to $U_{|a|}$ are increasing,
there is $s_1\geq s_0$ such that
$B\in U_{|a|,s}$ for every $s\geq s_1$. Also $B\in\mathcal P_s$
for every $s$. Thus, for every $s\geq s_1$,
\[
  B\in\mathcal P_s\cap\cyl{r_s(a,i)}\cap U_{|a|,s}.
\]
The test \eqref{eq:positive-test} is therefore positive
at every visit to $(a,i)$ after this point. The slot
has no current entry, and the fair schedule visits it
infinitely often. At such a visit the construction
must make it act, recording an event with parameters
$(a,t,i)$ and hence enumerating $i$ into $S_{a,t}$.
This contradiction proves \eqref{eq:negative-decision}.
\end{proof}

\section{Generalized lowness and specialness}\label{sec:GL}

The terms \emph{stopped} and \emph{complete} refer to the parsing in
Lemma~\ref{lem:paths}. In the decoding procedure below, $n$ counts
decoding steps, and $p_n,t_n$ abbreviate the final values
$p(a_n),\ep(a_n)$. They should not be confused with stage-$n$
approximations from Section~\ref{sec:construction}. We use the same
local notation for the coding procedure in Section~\ref{sec:coding}.

\begin{lemma}[Decoding a complete path]\label{lem:decoder}
For a complete $B\in\mathcal P$, the oracle $B\join0'$ computes its sequence
of addresses, final roots, final epochs, exit indices, and coding bits.
\end{lemma}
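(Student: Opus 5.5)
We decode the address sequence by recursion on $n$, maintaining at step $n$ the current address $a_n$, its final root $p_n=p(a_n)$, and its final epoch $t_n=\ep(a_n)$. At the start $a_0=\emptyseq$, $p_0=\emptyseq$ and $t_0=0$, all known outright. The decoding uses two facts about a complete path, both from Lemma~\ref{lem:paths}: $p_n\prec B$, and the tail $R$ of $B$ after $p_n$ does not belong to $\mathcal Q$.

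Suppose $a_n,p_n,t_n$ have been computed. The first task is to find the exit index $i_n$. Since the tail $R$ lies outside $\mathcal Q$, Lemma~\ref{lem:exits} gives a unique $\eta_i\prec R$. Using the oracle $B$, we search for the shortest prefix of $R$ not in $T$; this search terminates because $R\notin\mathcal Q$ and $T$ is computable. We then locate that prefix in the computable enumeration of $E$, which yields $i_n$. This step needs only $B$.

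The second task is to find the final stem, and this is where $0'$ is used. By Lemma~\ref{lem:decision} applied to $a_n$ and $i_n$, the question is whether $i_n\in S_{a_n,t_n}=W_{q(a_n,t_n)}$, and $0'$ answers it.
\begin{enumerate}[label=\textup{(\roman*)},leftmargin=*]
\item If the answer is yes, we enumerate the historical record until the unique event $(a_n,t_n,i_n,\rho,u)$ appears. Lemma~\ref{lem:decision} then gives $g(a_n,i_n)=\rho$ and child epoch $u$.
\item If the answer is no, Lemma~\ref{lem:decision} gives $g(a_n,i_n)=p_n\concat\eta_{i_n}$ and child epoch $t_n$.
\end{enumerate}
Next we read the coding bit $b_n=B(|g(a_n,i_n)|)$. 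This bit is defined because \eqref{eq:paths-exit-containment} ensures $g(a_n,i_n)\prec B$. Finally we set
\[
  a_{n+1}=a_n\concat(i_n,b_n),\qquad
  p_{n+1}=g(a_n,i_n)\concat b_n,
\]
and $t_{n+1}$ is the child epoch found above. The identity for $p_{n+1}$ is the final-root identity from the proof of Lemma~\ref{lem:paths}.

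All steps are uniform, so the whole sequence is computable from $B\join0'$. The main obstacle is that the final epoch cannot be computed directly as a limit, and $0'$ alone does not obviously compute it. The key point is that the epochs are decoded along the path: each $t_{n+1}$ is fixed by a single $0'$-query about $S_{a_n,t_n}$, together with an effective search of the record. This works because Lemma~\ref{lem:decision} guarantees that the event found in case (i) is exactly the one that gives the final stem and child epoch.
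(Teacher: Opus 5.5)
Your proposal is correct and follows essentially the same route as the paper: the same inductive triple $(a_n,p_n,t_n)$ starting from $(\emptyseq,\emptyseq,0)$, recovery of $i_n$ from $B$ alone, a single $0'$-query about $S_{a_n,t_n}$ followed by a search of the historical record, and Lemma~\ref{lem:decision} to guarantee that $t_{n+1}$ is again the final epoch. One correction to your closing commentary, which the proof does not use: $0'$ does compute $\ep$ and $p$ outright, because their stage approximations are computable and eventually constant at each input, so the limit lemma applies (the paper notes this in Section~\ref{sec:conclusion}); decoding the epochs along the path is therefore a design choice here, not a necessity, though it becomes essential for the $Z$-oracle decoder in Proposition~\ref{prop:coding}, which has no access to $0'$.
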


\begin{proof}
We describe a single oracle procedure for all complete paths,
with the computable tree $T$ and the computable construction
fixed throughout.

At step $n$, the procedure has a triple $(a_n,p_n,t_n)$ such
that $a_n$ is the address at level $n$ in the parsing of $B$,
\[
  p_n=p(a_n),\qquad t_n=\ep(a_n),\qquad p_n\prec B.
\]
Initially this holds with
\[
  (a_0,p_0,t_0)=(\emptyseq,\emptyseq,0).
\]

Suppose the current triple is known. The tail of $B$ after
$p_n$ does not belong to $\mathcal Q=[T]$, since $B$ is complete.
Using $B$ and the computable tree $T$, find the shortest
prefix of this tail which lies outside $T$. This is an exit,
hence a member of the set $E$ defined in
Section~\ref{sec:exits}. Recall that we fixed there a computable
enumeration without repetitions
\[
  E=\{\eta_i:i\in\N\}.
\]
Search this enumeration to find the unique index $i_n$.

Since the family $(S_{a,t})_{a,t}$ is uniformly c.e., the
oracle $0'$ decides whether $i_n\in S_{a_n,t_n}$, uniformly
in the finite parameters $a_n,t_n,i_n$.
For a positive answer, search the historical record for the
unique event
\[
  (a_n,t_n,i_n,\rho_n,u_n).
\]
This search terminates. For a negative answer, put
\[
  \rho_n=p_n\concat\eta_{i_n},\qquad u_n=t_n.
\]
By Lemma~\ref{lem:decision}, in either case
\[
  \rho_n=g(a_n,i_n),\qquad
  \ep(a_n\concat(i_n,b))=u_n
  \quad(b<2).
\]

By \eqref{eq:paths-exit-containment}, applied to the exit $i_n$,
$B$ extends the final stem $\rho_n$.
Read
\[
  b_n=B(|\rho_n|)
\]
and define
\[
  a_{n+1}=a_n\concat(i_n,b_n),\qquad
  p_{n+1}=\rho_n\concat b_n,\qquad
  t_{n+1}=u_n.
\]
The final-root recursion gives
$p_{n+1}=p(a_{n+1})$, and the preceding application of
Lemma~\ref{lem:decision} gives
$t_{n+1}=\ep(a_{n+1})$. The parsing rule identifies
$a_{n+1}$ as the next address of $B$, and
$p_{n+1}\prec B$. Thus the invariant is preserved.

Every search in each step terminates for a complete $B$.
Running finitely many steps computes the data at any
specified level, uniformly from $B\join0'$.
The procedure never tests whether an epoch is final, nor is it
given any stabilisation stage. Finality is preserved
inductively from the initial triple by
Lemma~\ref{lem:decision}.
\end{proof}

\begin{proposition}\label{prop:allGL}
Every member of $\mathcal P$ belongs to $\GL$.
\end{proposition}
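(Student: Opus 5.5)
We split according to the parsing in Lemma~\ref{lem:paths}. For a stopped path $B=p(a)\concat R$ with $R\in\mathcal Q$, the prefix $p(a)$ is a fixed finite string. Hence $B\eqT R$, and uniformly $B'\eqT R'$ and $B\join0'\eqT R\join0'$. Since $\mathcal Q\subseteq\GL$, we have $R'\eqT R\join0'$, and therefore $B'\eqT B\join0'$. This case is immediate. The only point to note is that non-uniformity in $a$ is harmless, because we only need the degree statement for each individual $B$.

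The substantial case is a complete path $B$. The direction $B\join0'\leT B'$ always holds, so it remains to show $B'\leT B\join0'$. We decide $e\in B'$ using $B\join0'$ as follows. Take $n=2e$, so that $U_{2e}=\{X:\Phi_e^X(e)\downarrow\}$. By Lemma~\ref{lem:decoder}, $B\join0'$ computes the address $a_{2e}$ of level $2e$ in the parsing of $B$, together with $p(a_{2e})$, $\ep(a_{2e})=t$, and the exit index $i=i_{2e}$. Then $B\in\cyl{p(a_{2e})\concat\eta_i}$, and this address has length $|a_{2e}|=2e$. With $0'$ we ask whether $i\in S_{a_{2e},t}$, using the index $q(a_{2e},t)$.

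By Lemma~\ref{lem:decision}, the two answers settle the question. If $i\in S_{a_{2e},t}$, then $B\in\mathcal P\cap\cyl r\subseteq U_{2e}$, so $\Phi_e^B(e)\downarrow$. If $i\notin S_{a_{2e},t}$, then $\mathcal P\cap\cyl r\cap U_{2e}=\varnothing$, so $B\notin U_{2e}$ and $e\notin B'$. The procedure is uniform in $e$, so $B'\leT B\join0'$.

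The main point to check carefully is that the decoder's epoch at level $2e$ is genuinely the final epoch $\ep(a_{2e})$, since Lemma~\ref{lem:decision} applies only at final epochs. This is exactly what Lemma~\ref{lem:decoder} guarantees inductively. We also need that the odd levels do not interfere, which they do not: we simply read off the data at level $2e$.
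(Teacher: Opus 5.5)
Your proposal is correct and follows the paper's proof essentially step for step. In the stopped case you pass through the finite prefix to use $\mathcal Q\subseteq\GL$. In the complete case you use the decoder of Lemma~\ref{lem:decoder} at level $2e$ and then Lemma~\ref{lem:decision} at the final epoch, so that $e\in B'$ reduces to the $0'$-decidable question $i_{2e}\in S_{a_{2e},t_{2e}}$.
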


\begin{proof}
Fix $B\in\mathcal P$. By Lemma~\ref{lem:paths}, it is either stopped
or complete.
Suppose first that $B=p(a)\concat R$ is stopped, where
$R\in \mathcal Q$. Since $p(a)$ is finite, adding or deleting this fixed
prefix is computable, so $B\eqT R$ and therefore $B'\eqT R'$.
Since $\mathcal Q\subseteq\GL$,
\[
  B'\eqT R'\eqT R\join0'\eqT B\join0'.
\]

Now suppose that $B$ is complete. Given $e$, use
Lemma~\ref{lem:decoder} to compute its address $a_{2e}$,
final epoch $t_{2e}$, and exit index $i_{2e}$ at level $2e$,
using $B\join0'$. By the parsing rule,
\[
  B\in \mathcal P\cap
  \cyl{p(a_{2e})\concat\eta_{i_{2e}}}.
\]
Therefore \eqref{eq:jumpopens} and
Lemma~\ref{lem:decision} give
\[
  e\in B'
  \quad\Longleftrightarrow\quad
  B\in U_{2e}
  \quad\Longleftrightarrow\quad
  i_{2e}\in S_{a_{2e},t_{2e}}.
\]
Since the family $(S_{a,t})_{a,t}$ is uniformly c.e.,
the last condition is decidable using $0'$ once its
finite parameters have been computed. Thus
$B'\leT B\join0'$.
Conversely, $B\leT B'$ and $0'\leT B'$ for every real $B$,
so $B\join0'\leT B'$.
Hence $B'\eqT B\join0'$ in this case as well.
\end{proof}

\begin{remark}\label{rem:pointwise}
The conclusion is asserted pointwise; no single reduction
uniform over all $B\in\mathcal P$ is claimed.
On complete paths the decoder above is uniform. If
$B=p(a)\concat R$ is stopped, the reduction may depend on the fixed
finite prefix $p(a)$ and on a reduction witnessing
$R'\leT R\join0'$ for the particular tail $R\in \mathcal Q$.
No effective procedure distinguishing stopped paths from complete
paths is required.
\end{remark}

\begin{proposition}\label{prop:special}
The class $\mathcal P$ is special.
\end{proposition}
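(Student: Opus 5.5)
Nonemptiness of $\mathcal P$ is already given by Lemma~\ref{lem:paths}, so the proof reduces to showing that $\mathcal P$ has no computable member. Take an arbitrary $B\in\mathcal P$. By Lemma~\ref{lem:paths} it is either stopped or complete, and I will handle the two cases separately. The odd-indexed open sets $U_{2e+1}$ from \eqref{eq:diagonalopens} are the tool for the complete case.

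\emph{Stopped case.} Here $B=p(a)\concat R$ with $R\in\mathcal Q$. Computability of $B$ would give computability of $R$, since the finite prefix $p(a)$ can be removed effectively. This is impossible because $\mathcal Q$ is special. Nothing beyond specialness of $\mathcal Q$ is needed.

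\emph{Complete case.} Suppose for a contradiction that $B$ is complete and computable. Choose $e$ with $\varphi_e(m)=B(m)$ for all $m$, so that $\varphi_e$ is total with binary values. Then by definition $B\notin U_{2e+1}$. Consider level $n=2e+1$ of the parsing, with address $a_n$, final root $p(a_n)$ and exit index $i_n$. By the parsing rule, $B\in\mathcal P\cap\cyl{r}$ with $r=p(a_n)\concat\eta_{i_n}$.

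Apply Lemma~\ref{lem:decision} at $(a_n,i_n)$ with $t=\ep(a_n)$, recalling $|a_n|=n$.
\begin{itemize}
\item If $i_n\in S_{a_n,t}$, then by \eqref{eq:positive-decision} we get $\mathcal P\cap\cyl r\subseteq U_n$. So $B\in U_{2e+1}$, a contradiction.
\item If $i_n\notin S_{a_n,t}$, then by \eqref{eq:negative-decision} the class $\mathcal P\cap\cyl r$ is disjoint from $U_{2e+1}$. To use this, I need some $X\in\mathcal P\cap\cyl r$ that lies in $U_{2e+1}$.
\end{itemize}

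For the second case, use \eqref{eq:stage-nonempty-exit} in its limit form: $p(a_n\concat(i_n,0))\concat\mathcal Q\subseteq\mathcal P\cap\cyl r$, which follows from \eqref{eq:limit-copies} together with $p(a_n\concat(i_n,0))=g(a_n,i_n)\concat 0\succeq r$. This region contains two kinds of reals that differ from $B$ and so lie in $U_{2e+1}$.
\begin{itemize}
\item If $B$ also lies in the copy $p(a_n\concat(i_n,0))\concat\mathcal Q$, then $B$'s tail would be in $\mathcal Q$, which is impossible for a complete path. More simply, every real $p\concat R$ there with $R\in\mathcal Q$ differs from $B$ somewhere, since otherwise $R$ would be a computable tail of $B$, contradicting specialness of $\mathcal Q$.
\item Alternatively, take the sibling child $p(a_n\concat(i_n,1-b_n))\concat R$. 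It disagrees with $B$ at position $|g(a_n,i_n)|$, because $B$ has bit $b_n$ there.
\end{itemize}
Either way we get $X\in\mathcal P\cap\cyl r$ with $X(m)\ne\varphi_e(m)$ for some $m$, so $X\in U_{2e+1}$. This contradicts \eqref{eq:negative-decision}. The sibling-child argument is the cleanest, since it directly exhibits a disagreement at a known coordinate.

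The only delicate point is the negative case, where one must notice that the exit region always contains surviving reals distinct from $B$. In the positive case, the danger would be that the action prunes the region down to something avoiding $B$. But $B\in\mathcal P\cap\cyl r$ by assumption, so the positive decision is applied to $B$ itself. With both cases contradictory, no complete path is computable, and $\mathcal P$ is special.
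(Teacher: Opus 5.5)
Your proof is correct and takes essentially the same route as the paper. Stopped members are handled by the specialness of $\mathcal Q$. For a computable complete member, the positive decision at level $2e+1$ would put $B$ in $U_{2e+1}$, and the negative decision is refuted by a real in the sibling copy $p(a\concat(i,1-b))\concat\mathcal Q$, which differs from $B$ immediately after the stem. The only difference is cosmetic: the paper first uses Lemma~\ref{lem:decision} to get $g(a,i)=r$ and flips the bit at position $|r|$, whereas you flip it at $|g(a_n,i_n)|$ directly, so you never need that identification.
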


\begin{proof}
The class $\mathcal P$ is $\Pi^0_1$ by construction and is nonempty
by Lemma~\ref{lem:paths}. Every stopped member has the form
$B=p(a)\concat R$, where $R\in \mathcal Q$. Removing this fixed
finite prefix shows that $R\leT B$, so $B$ is noncomputable
because $\mathcal Q$ is special.

Suppose for a contradiction that a complete member $B\in\mathcal P$
is computable. Choose $e$ such that $\varphi_e$ is its
total binary characteristic function. By
\eqref{eq:diagonalopens},
\[
  U_{2e+1}=\Cantor\setminus\{B\}.
\]
Let $a$ be the address of $B$ at level $2e+1$, let
$t=\ep(a)$, and let $i$ be the corresponding exit index.
Put
$r=p(a)\concat\eta_i$. By the parsing rule,
\[
  B\in \mathcal P\cap\cyl r.
\]

If $i\in S_{a,t}$, \eqref{eq:positive-decision} would give
$B\in U_{2e+1}$, which is false. Thus $i\notin S_{a,t}$.
By \eqref{eq:negative-decision} and the displayed identity
for $U_{2e+1}$,
\[
  \mathcal P\cap\cyl r\subseteq\{B\}.
\]

On the other hand, Lemma~\ref{lem:decision} gives
$g(a,i)=r$. Since $r\prec B$, the next bit of $B$ after $r$
is $B(|r|)$. Let
\[
  b=1-B(|r|).
\]
The final-root recursion gives
\[
  p(a\concat(i,b))=g(a,i)\concat b=r\concat b.
\]
By \eqref{eq:limit-copies},
\[
  r\concat b\concat \mathcal Q
  =p(a\concat(i,b))\concat \mathcal Q
  \subseteq \mathcal P\cap\cyl r.
\]
Since $\mathcal Q$ is nonempty, choose $R\in \mathcal Q$ and put
\[
  C=r\concat b\concat R.
\]
Then $C\in \mathcal P\cap\cyl r$, while
\[
  C(|r|)=b\ne B(|r|),
\]
so $C\ne B$. This contradicts
$\mathcal P\cap\cyl r\subseteq\{B\}$.
\end{proof}

\section{The Join Property and its local form}\label{sec:coding}

The goal of this section is to turn the permanent event record into the
coding required by the Join Property. The key point is that, along a
suitably chosen complete path, the coding can be decoded both from
$B\join Z$ and from $B\join0'$. We first establish an agreement lemma
used to select the exits, then carry out the coding from an arbitrary
final root. The availability of final roots inside every nonempty
relatively basic open subclass yields the local form of the Join
Property and completes the proof of the extension theorem.

For a real $Z$, write $\overline Z=\N\setminus Z$ for its complement;
as characteristic functions, $\overline Z(n)=1-Z(n)$.

\begin{lemma}[Agreement with a c.e. set]\label{lem:agreement}
Let $Z$ be noncomputable. One can choose
$\widetilde Z\in\{Z,\overline Z\}$ so that
$\overline{\widetilde Z}$ is not c.e. For every c.e. set $S$, there
is an $i$ satisfying
\begin{equation}\label{eq:agreement}
  i\in\widetilde Z\quad\Longleftrightarrow\quad i\in S.
\end{equation}
Given an index of $S$, such an $i$ can be found using
$\widetilde Z\join0'$.
\end{lemma}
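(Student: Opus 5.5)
The plan has three parts: choose $\widetilde Z$, show an agreement point exists for every c.e.\ $S$, and then locate one effectively from $\widetilde Z\join0'$.

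\emph{Choice of $\widetilde Z$.} Since $Z$ is noncomputable, $Z$ and $\overline Z$ cannot both be c.e., because a set that is c.e.\ and co-c.e.\ is computable. If $Z$ is c.e., we take $\widetilde Z=\overline Z$; then $\overline{\widetilde Z}=Z$, and $\widetilde Z$ itself is not c.e. Otherwise $Z$ is not c.e., and we take $\widetilde Z=\overline Z$ again, so that $\overline{\widetilde Z}=Z$ is not c.e. In both cases $\widetilde Z=\overline Z$ works. Formally it is cleaner to say: pick $\widetilde Z\in\{Z,\overline Z\}$ with $\overline{\widetilde Z}$ not c.e. At least one of $\overline Z$ and $Z$ is not c.e., and these are the complements of $Z$ and $\overline Z$ respectively, so such a choice always exists.

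\emph{Existence of $i$.} Let $S$ be c.e., and suppose no $i$ satisfies \eqref{eq:agreement}. Then for every $i$ we have $i\in S\Leftrightarrow i\notin\widetilde Z$, so $S=\overline{\widetilde Z}$. This makes $\overline{\widetilde Z}$ c.e., contradicting the choice of $\widetilde Z$. Hence the set of agreement points is nonempty.

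\emph{Effective search.} Given an index $k$ with $S=W_k$, use the oracle $\widetilde Z\join0'$ to test $i=0,1,2,\dots$ in turn. For each $i$, query $\widetilde Z$ for the bit $\widetilde Z(i)$, and query $0'$ whether $i\in W_k$; the latter is decidable uniformly in $(k,i)$ because $W_k$ is c.e. Stop at the first $i$ where the two answers agree. The existence argument guarantees that this search halts, and the output satisfies \eqref{eq:agreement}.

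The only subtle point is the first step: the needed asymmetry is \emph{non-c.e.-ness of the complement}, not merely noncomputability. The choice of $\widetilde Z$ is made once, noneffectively, depending only on $Z$. Since $\widetilde Z\eqT Z$, this is harmless for the later coding: $\widetilde Z\join0'\eqT Z\join0'$ and $B\join\widetilde Z\eqT B\join Z$.
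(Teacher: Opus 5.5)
You made a concrete error in the opening case analysis for $\widetilde Z$. When $Z$ is c.e., choosing $\widetilde Z=\overline Z$ gives $\overline{\widetilde Z}=Z$, which is c.e.\ --- exactly what must be avoided. You checked that $\widetilde Z$ itself is not c.e., but the property you need is that its \emph{complement} is not c.e.

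So the claim ``in both cases $\widetilde Z=\overline Z$ works'' is false. Take $Z=0'$ and $S=W_k=0'$. Then $\widetilde Z=\overline{0'}$ disagrees with $S$ at every $i$, no agreement point exists, and your search would never halt. In the c.e.\ case you must take $\widetilde Z=Z$. Its complement $\overline Z$ is not c.e.\ because $Z$ is c.e.\ and noncomputable.

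Your ``formally cleaner'' sentence does make the correct choice. It takes $\widetilde Z=Z$ when $\overline Z$ is not c.e., and $\widetilde Z=\overline Z$ when $Z$ is not c.e., which is exactly the paper's case split.

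The rest of your argument is the paper's proof. Failure of agreement everywhere would force $S=\overline{\widetilde Z}$, and a least-agreement search using $\widetilde Z\join0'$ finds $i$, with $0'$ deciding $i\in W_k$ uniformly. If you delete the erroneous case analysis and keep only the corrected choice, the proof is correct and coincides with the paper's.
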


\begin{proof}
Since $Z$ is noncomputable, $Z$ and $\overline Z$ cannot both be c.e.
If $\overline Z$ is not c.e., take $\widetilde Z=Z$; otherwise
$\overline Z$ is c.e., so $Z$ is not c.e., and we take
$\widetilde Z=\overline Z$. In either case
$\overline{\widetilde Z}$ is not c.e.

If \eqref{eq:agreement} failed for every $i$, then
$S=\overline{\widetilde Z}$, contradicting the fact that
$\overline{\widetilde Z}$ is not c.e.

Given an index for $S$, the oracle $0'$ decides membership in $S$
uniformly. Using also $\widetilde Z$, search for the least $i$ on
which $S$ and $\widetilde Z$ agree. The preceding argument guarantees
that this search terminates.
\end{proof}

The choice of $\widetilde Z$ is made once and for all and need not
be uniform in $Z$. Once this choice is fixed, the search for an
agreement is uniform in an index of $S$.

\begin{proposition}[Coding from a final root]\label{prop:coding}
Fix an address $a_*$ and reals $A,Z$ with $A\geT Z\join0'$ and
$Z\gtT0$. There is a complete path
$B\in \mathcal P\cap\cyl{p(a_*)}$ satisfying \eqref{eq:JP}.
\end{proposition}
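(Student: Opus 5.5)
The plan is to build $B$ as the union of the final roots along an infinite address chain starting at $a_*$. The chain is chosen level by level so that the exit index selected at each level carries, through $Z$, the one piece of $0'$-information the decoder of Lemma~\ref{lem:decoder} needs. The coding bit at each level carries one bit of $A$. Fix $\widetilde Z\in\{Z,\overline Z\}$ as in Lemma~\ref{lem:agreement}, so that $\widetilde Z\eqT Z$. Hard-code the finite data $a_0=a_*$, $p_0=p(a_*)$ and $t_0=\ep(a_*)$; they are constants, so no uniformity in $a_*$ is needed.

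\emph{Step 1: the construction.} Suppose $(a_n,p_n,t_n)$ has been defined with $p_n=p(a_n)$ and $t_n=\ep(a_n)$.
\begin{itemize}
\item Apply Lemma~\ref{lem:agreement} to the c.e.\ set $S_{a_n,t_n}=W_{q(a_n,t_n)}$. This gives an exit index $i_n$ with $i_n\in\widetilde Z\Leftrightarrow i_n\in S_{a_n,t_n}$, found using $\widetilde Z\join0'$. Since every $i\in\N$ indexes an exit, $i_n$ is admissible.
\item As in the proof of Lemma~\ref{lem:decoder}, determine $\rho_n=g(a_n,i_n)$ and $u_n=\ep(a_n\concat(i_n,b))$ via Lemma~\ref{lem:decision}. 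If $i_n\in S_{a_n,t_n}$, search the historical record for the event. Otherwise put $\rho_n=p_n\concat\eta_{i_n}$ and $u_n=t_n$.
\item Set $b_n=A(n)$, $a_{n+1}=a_n\concat(i_n,b_n)$, $p_{n+1}=\rho_n\concat b_n$ and $t_{n+1}=u_n$.
\end{itemize}
By the converse part of Lemma~\ref{lem:paths}, applied to the chain (prefixed by the ancestors of $a_*$), $B=\bigcup_n p_n$ is a complete member of $\mathcal P$ extending $p(a_*)$. Its parsing follows exactly this chain.

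\emph{Step 2: the reductions.} Every step above is effective in $A$, because $\widetilde Z\join0'\leT Z\join0'\leT A$. Hence $B\leT A$, and so $B\join0'\leT A$ and $B\join Z\leT A$.

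Conversely, $B\join0'$ runs the decoder of Lemma~\ref{lem:decoder}, started at the hard-coded triple for $a_*$. It recovers the bits $b_n=A(n)$, so $A\leT B\join0'$.

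The key point, and the step I expect to be the main obstacle, is decoding from $B\join Z$ without $0'$. The decoder uses $0'$ only to decide whether $i_n\in S_{a_n,t_n}$, with $i_n$ itself read off from $B$ and the computable tree $T$. By the choice of $i_n$, this question has the same answer as whether $i_n\in\widetilde Z$, which $Z$ decides. In the positive case the search for the unique event terminates; in the negative case the stem is computed directly. Hence the same inductive invariant as in Lemma~\ref{lem:decoder} is maintained using only $B\join Z$. This gives $A\leT B\join Z$.

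Combining the reductions yields $A\eqT B\join0'\eqT B\join Z$. Proposition~\ref{prop:allGL} gives $B'\eqT B\join0'$, so \eqref{eq:JP} holds. The remaining care is bookkeeping: checking that the triples produced at each level are the final values (by Lemma~\ref{lem:decision}, inductively, exactly as in Lemma~\ref{lem:decoder}), and that completeness of $B$ makes every search terminate.
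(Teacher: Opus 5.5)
Your proposal is correct and follows the paper's proof essentially step for step. It uses the same agreement-driven choice of $i_n$ via Lemma~\ref{lem:agreement} and places the coding bit $A(n)$ right after the final stem. Completeness comes from the converse part of Lemma~\ref{lem:paths}, and there are two decoders that differ only in how they answer $i_n\in S_{a_n,t_n}$: one uses $0'$, the other uses $\widetilde Z$ thanks to the agreement. Restarting the decoder of Lemma~\ref{lem:decoder} at the hard-coded triple for $a_*$ is also exactly what the paper does.
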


\begin{proof}
Fix $\widetilde Z$ as in Lemma~\ref{lem:agreement}.
This orientation need not be chosen uniformly from $Z$;
it is one fixed binary choice, and $\widetilde Z\eqT Z$.
Fix also the finite parameters
\[
  a_*,\qquad p(a_*),\qquad \ep(a_*),
\]
together with an index for a Turing reduction witnessing
$Z\join0'\leT A$.
These finite parameters and this index may be built into the
relevant oracle programs. The initial root and its epoch are treated here as fixed
parameters; their $0'$-uniform recovery is discussed in
Section~\ref{sec:conclusion}.

\smallskip
\noindent\emph{Construction of the path.}
Using $A$, we construct triples $(a_n,p_n,t_n)$ satisfying
\[
  a_*\preceq a_n,\qquad |a_n|=|a_*|+n,\qquad
  p_n=p(a_n),\qquad t_n=\ep(a_n).
\]
Start with
\[
  (a_0,p_0,t_0)=(a_*,p(a_*),\ep(a_*)).
\]
Given the current triple, choose the least $i_n$ such that
\begin{equation}\label{eq:coding-agreement}
  i_n\in\widetilde Z
  \quad\Longleftrightarrow\quad
  i_n\in S_{a_n,t_n}.
\end{equation}

Lemma~\ref{lem:agreement} guarantees existence. Since
$\widetilde Z\eqT Z$, we have
$\widetilde Z\join0'\leT A$. As the family
$(S_{a,t})_{a,t}$ is uniformly c.e., the least such $i_n$
can therefore be found using $A$.

If $i_n\in S_{a_n,t_n}$, search the historical record for
the unique event $(a_n,t_n,i_n,\rho_n,u_n)$.
Otherwise put
\[
  \rho_n=p_n\concat\eta_{i_n},\qquad u_n=t_n.
\]
By Lemma~\ref{lem:decision}, in either case
\[
  \rho_n=g(a_n,i_n),\qquad
  \ep(a_n\concat(i_n,b))=u_n
  \quad(b<2).
\]
Set
\begin{equation}\label{eq:coding-next}
  a_{n+1}=a_n\concat(i_n,A(n)),\qquad
  p_{n+1}=\rho_n\concat A(n),\qquad
  t_{n+1}=u_n.
\end{equation}
The final-root recursion gives
$p_{n+1}=p(a_{n+1})$, and the preceding application of
Lemma~\ref{lem:decision} gives
$t_{n+1}=\ep(a_{n+1})$. Thus the induction invariant
is preserved.

Moreover,
\[
  p_n\concat\eta_{i_n}
  \preceq\rho_n\prec p_{n+1}.
\]
Since every exit is nonempty, the roots are nested and
their lengths increase by at least two at each step.
Let
\[
  B=\bigcup_n p_n.
\]
Adjoining the proper prefixes of $a_*$, in increasing order of
length, to the sequence $(a_n)_{n\in\N}$ gives an infinite
address chain starting at $\emptyseq$.

The final roots of these additional
addresses are prefixes of $p(a_*)=p_0$, so their inclusion
does not change the union. Lemma~\ref{lem:paths} therefore
shows that $B$ is complete and
\[
  B\in \mathcal P\cap\cyl{p(a_*)}.
\]

\smallskip
\noindent\emph{The reduction $B\leT A$.}
With the fixed auxiliary data specified above, the sequence
$(p_n)_{n\in\N}$ is computable from $A$.

To compute $B(k)$,
construct roots until $|p_n|>k$ and return $p_n(k)$.
This terminates because the lengths are unbounded. Hence
\begin{equation}\label{eq:BbelowA}
  B\leT A.
\end{equation}
By construction, for every $n$,
\[
  B(|\rho_n|)=A(n).
\]

\smallskip
\noindent\emph{Decoding from $B\join Z$.}
Use the fixed orientation to answer questions about
$\widetilde Z$, and start with
$(a_*,p(a_*),\ep(a_*))$.
Inductively suppose the triple $(a_n,p_n,t_n)$ used in
the coding has been recovered.
Using $B$ and the computable tree $T$, find the shortest prefix
of the tail after $p_n$ which lies outside $T$. This is the unique
exit $\eta_{i_n}$, and its index $i_n$ is recovered from the fixed
computable enumeration of $E$.
Thus the decoder recovers from $B$ the exit selected during the
coding; it does not repeat the least-agreement search.
If $i_n\in\widetilde Z$, search the historical record for
the event with parameters $(a_n,t_n,i_n)$ and retrieve
$(\rho_n,u_n)$.
This search terminates because \eqref{eq:coding-agreement} gives
$i_n\in S_{a_n,t_n}$, so the corresponding event exists.

If $i_n\notin\widetilde Z$,
the same equivalence guarantees that no such event exists,
so use
\[
  \rho_n=p_n\concat\eta_{i_n},\qquad u_n=t_n.
\]
In either case these are exactly the data used in the
coding. Read $b_n=B(|\rho_n|)=A(n)$ and update the triple to
\[
  (a_n\concat(i_n,b_n),\rho_n\concat b_n,u_n).
\]
By \eqref{eq:coding-next}, this is the next coding triple.
Thus the induction continues and
$A\leT B\join Z$.

\smallskip
\noindent\emph{Decoding from $B\join0'$.}
Start with the same fixed initial triple and recover the exit
$i_n$ from $B$ exactly as above.

Instead of querying
$\widetilde Z$, use $0'$ to decide
$i_n\in S_{a_n,t_n}$.
The positive or negative answer recovers the same
$\rho_n,u_n$ as above, after which the coding bit and
the next triple are recovered in the same way.
This decoder does not use $Z$.
Consequently $A\leT B\join0'$.
Together with $B\leT A$ and $Z,0'\leT A$, the two decoding
reductions give
\[
  A\eqT B\join Z\eqT B\join0'.
\]
By Proposition~\ref{prop:allGL},
\[
  B'\eqT B\join0',
\]
and hence all the equivalences in \eqref{eq:JP} hold.
The agreement \eqref{eq:coding-agreement} is required
only along the path selected for the given $A,Z$,
not on every member of $\mathcal P$.
\end{proof}

\begin{lemma}[Final roots are locally available]\label{lem:localroot}
If $\mathcal P\cap\cyl{\sigma}\ne\varnothing$, some final root $p(a)$ extends
$\sigma$.
\end{lemma}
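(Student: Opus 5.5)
Pick any $B\in\mathcal P\cap\cyl{\sigma}$ and use its parsing from Lemma~\ref{lem:paths}. The aim is to reach a final root $p(a)$ that is comparable with $\sigma$ and at least as long as $\sigma$. Since $p(a)\prec B$ and $\sigma\prec B$, such a root automatically satisfies $\sigma\preceq p(a)$.

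The proof splits into two cases.

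\emph{Case 1: $B$ is complete.} The parsing gives addresses $a_n$ with $p(a_n)\prec B$. The proof of Lemma~\ref{lem:paths} shows $|p(a_{n+1})|\geq|p(a_n)|+2$. Choose $n$ with $|p(a_n)|\geq|\sigma|$. Both $\sigma$ and $p(a_n)$ are prefixes of $B$, so $\sigma\preceq p(a_n)$.

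\emph{Case 2: $B$ is stopped.} Here $B=p(a)\concat R$ with $R\in\mathcal Q$.
\begin{itemize}
\item If $|p(a)|\geq|\sigma|$, we are done as in Case~1.
\item Otherwise $p(a)\prec\sigma$. Write $\sigma=p(a)\concat\delta$, so $\delta\prec R$ and hence $\mathcal Q\cap\cyl{\delta}\ne\varnothing$.
\end{itemize}
In the second subcase, Lemma~\ref{lem:exits} gives an exit $\eta_i\in E$ with $\delta\preceq\eta_i$; in fact $\delta\prec\eta_i$, since $\delta\in T$. The final-root identities from the proof of Lemma~\ref{lem:paths} then give
\[
  \sigma=p(a)\concat\delta\preceq p(a)\concat\eta_i\preceq g(a,i)\prec p(a\concat(i,0)).
\]
So the final root $p(a\concat(i,0))$ extends $\sigma$.

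The only step that is not routine is the stopped case, where no root along $B$'s own parsing need be long enough. The remedy is to leave $B$ and move into a fresh exit chosen inside $\cyl{\sigma}$. Lemma~\ref{lem:exits} supplies that exit, and the stem condition $p(a)\concat\eta_i\preceq g(a,i)$ ensures the child root still extends $\sigma$.
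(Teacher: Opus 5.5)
Your proof is correct and follows the paper's argument essentially step for step. It uses the same split into complete and stopped paths, and in the stopped case with $p(a)\prec\sigma$ it makes the same move to an exit $\eta_i\succeq\delta$ from Lemma~\ref{lem:exits} and the child root $p(a\concat(i,0))$. Splitting on length rather than on $\sigma\preceq p(a)$ is equivalent here, since both strings are prefixes of $B$.
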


\begin{proof}
Choose $B\in \mathcal P\cap\cyl{\sigma}$.

If $B$ is complete, let $(a_n)_{n\in\N}$ be its address
chain. The roots $p(a_n)$ are prefixes of $B$ and have
unbounded lengths. Choose $n$ with
$|p(a_n)|\geq|\sigma|$. Since $\sigma$ and $p(a_n)$ are
both prefixes of $B$, we have $\sigma\preceq p(a_n)$.

Suppose instead that
\[
  B=p(a_0)\concat R,\qquad R\in \mathcal Q.
\]
If $\sigma\preceq p(a_0)$, take $a=a_0$.
Otherwise, since $\sigma$ and $p(a_0)$ are both prefixes of $B$
and $\sigma\npreceq p(a_0)$, we have $p(a_0)\prec\sigma$.
Hence there is a nonempty finite string $\delta$ such that
\[
  \sigma=p(a_0)\concat\delta,\qquad \delta\prec R.
\]
In particular, $R\in\mathcal Q\cap\cyl{\delta}$, so
Lemma~\ref{lem:exits} gives an exit $\eta_i$ extending
$\delta$. By the final stem and root identities,
\[
  \sigma
  =p(a_0)\concat\delta
  \preceq p(a_0)\concat\eta_i
  \preceq g(a_0,i)
  \prec p(a_0\concat(i,0)).
\]
Thus $a=a_0\concat(i,0)$ is a suitable address.
\end{proof}

\begin{proof}[Completion of the proof of
Theorem~\ref{thm:extension}]
The class $\mathcal P$ is $\Pi^0_1$ by construction, and
$\mathcal Q\subseteq\mathcal P$ by Lemma~\ref{lem:paths}.
Propositions~\ref{prop:allGL} and \ref{prop:special}
show that $\mathcal P\subseteq\GL$ and that $\mathcal P$ is special.

Fix a finite binary string $\sigma$ such that
$\mathcal P\cap\cyl{\sigma}\ne\varnothing$.
By Lemma~\ref{lem:localroot}, choose an address $a$ with
$\sigma\preceq p(a)$. Hence
$\cyl{p(a)}\subseteq\cyl{\sigma}$.

Let $A,Z$ be arbitrary reals satisfying
$A\geT Z\join0'$ and $Z\gtT0$.
Apply Proposition~\ref{prop:coding} with initial address
$a_*=a$. It provides a complete path
\[
  B\in \mathcal P\cap\cyl{p(a)}
       \subseteq \mathcal P\cap\cyl{\sigma}
\]
such that
\[
  A\eqT B'\eqT B\join0'\eqT B\join Z.
\]
Thus $\mathcal P\cap\cyl{\sigma}$ has the Join Property.
Since $\sigma$ was arbitrary, this proves the local
conclusion and completes the theorem.
\end{proof}

\section{Separation and a nontrivial pseudojump}\label{sec:consequences}

We now derive the separation between the Join Property and pseudojump
inversion. The decisive observation is that no class contained in
$\mathrm{GL}_1$ can have the Pseudojump Inversion Property. This
completes the proof of the main theorem. We then record a stronger
form of the obstruction, showing that the failure already occurs for
a fixed nontrivial pseudojump operator which strictly raises every
input degree.

\begin{proposition}\label{prop:GLobstruction}
No class $\mathcal K\subseteq\GL$ has the Pseudojump Inversion Property.
\end{proposition}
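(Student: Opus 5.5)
We choose a single pseudojump operator $J_e$ and a single target $A\geT0'$ such that no $B\in\GL$ can satisfy \eqref{eq:PJI} for this pair. The natural choice is the jump itself. Fix $e$ with $W_e^X=X'$ uniformly in $X$; this is possible since $X'=\{n:\Phi_n^X(n)\downarrow\}$ is c.e.\ in $X$ by a single uniform procedure. Then $J_e(X)=X\join X'\eqT X'$ for every real $X$. Take $A=0''$, or in fact any $A\geT 0'$ not of the form $B\join0'$ with $B'\eqT B\join 0'$ and $B'\eqT A$.

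Suppose some $B\in\mathcal K$ satisfies $A\eqT J_e(B)\eqT B\join0'$. Then $B'\eqT B\join0'$ holds automatically from $B\in\GL$, so this choice alone produces no contradiction. Any $A\geT0'$ is still $B'$ for a $\GL$ real $B$ by Friedberg relativized to $0'$. The contradiction therefore has to come from the relation between $J_e(B)$ and $B'$, and the jump is the wrong operator.

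The better choice is an operator that overshoots the jump. Let $W_e^X=X''$, which is not c.e.\ in $X$, so this fails. Instead I would use the stronger observation that for $B\in\GL$,
\[
  B\join0'\eqT B'\geT J_e(B)
\]
for every $e$. So any requirement $J_e(B)\eqT B\join 0'$ is satisfiable in principle, and the obstruction must use a $J_e$ that is not reducible to $B\join 0'$ in a way that is forced. The plan is to take $J_e(X)=X\join W^X$, where $W^X$ codes $\emptyset'$ relative to $X$ only in a degenerate way, for instance $W^X=\{\langle n,m\rangle: \Phi_n^X(n)\downarrow\}$, and to show that then $J_e(B)\eqT B'$. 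Then require $A=J_e(B)\eqT B\join 0'$ with $A$ chosen so that the equality pins down $B$. The cleanest version is to pick $A=0'$. PJI then demands $B\in\mathcal K$ with $B\join0'\eqT0'$ and $J_e(B)\eqT0'$, that is, $B\leT0'$ and $B'\eqT 0'$ for $J_e=$ jump. That is just a low member, which $\GL$ does not exclude.

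The step I expect to be the main obstacle is finding the right $e$. With the jump operator, PJI for $A=0'$ asks for a low member of $\mathcal K$, and for general $A$ it asks for $B'\eqT B\join0'\eqT A$, which is consistent with $\GL$. So I would instead choose $J_e$ with $J_e(X)\gtT X$ uniformly and with $J_e(X)\join 0'$ strictly above $X\join0'$ for $\GL$ reals. This could be done by taking $W_e^X$ to be a set c.e.\ in $X$ and Turing-equivalent to $X'$ but arranged so that $X\join W_e^X\geT X'$. Then $A\eqT J_e(B)\eqT B\join 0'$ forces $A\eqT B'$. The target $A$ should then be chosen with $A\geT 0'$ but $A\not\eqT B'$ for any $B$ with $B'\eqT B\join 0'\eqT A$ and $B\leT A$. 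I am unsure such $A$ exists. The fallback is to look at the paper's promised "fixed nontrivial pseudojump operator which strictly raises every input degree" and verify, via the $\GL$ identity $B'\eqT B\join0'$, that it gives $J_e(B)\join 0'\gtT B\join0'$. That would be incompatible with $J_e(B)\eqT B\join 0'$ for every $B\in\mathcal K$, and would make the proposition follow for every $A\geT0'$.
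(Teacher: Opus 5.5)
Your proposal never settles on a working operator, and the direction you end up pursuing cannot succeed. You correctly observe that $J_e(B)\leT B'\eqT B\join0'$ for every $e$ and every $B\in\GL$. But this already rules out what you then look for. For any pseudojump operator, $J_e(B)\join0'\leT B'\eqT B\join0'$, so no $J_e$ can push $J_e(B)\join0'$ strictly above $B\join0'$ on a $\GL$ real.

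Your fallback fails for the same reason. If $X\ltT J(X)$ and $J(X)'\eqT X'$, then $J(B)\join0'\leT J(B)'\eqT B'\eqT B\join0'$. Hence $J(B)\join0'\eqT B\join0'$, not strictly above. The jump-operator route fails too, as you note. For a class with $\JP$, such as the paper's $\mathcal P$, the requirement $B'\eqT B\join0'\eqT A$ is always met (apply $\JP$ with $Z=0'$), so no choice of $A$ helps there.

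The missing idea is to attack the other half of \eqref{eq:PJI}. You read the automatic inequality $J_e(B)\leT B\join0'$ as saying the equation is ``satisfiable in principle.'' In fact it says the real content of \eqref{eq:PJI} on $\GL$ is the reverse reduction $B\join0'\leT J_e(B)$, and in particular $0'\leT J_e(B)$. So you want an operator whose value is too \emph{small}, not too large.

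The paper takes the trivial operator with $W^X_{e_\varnothing}=\varnothing$, so $J_{e_\varnothing}(B)\eqT B$. Then \eqref{eq:PJI} with $A=0'$ (any $A$ would do) forces $0'\leT B$. It follows that $B'\eqT B\join0'\eqT B$, contradicting strictness of the jump. The operator of Proposition~\ref{prop:properpseudo} fails for exactly this reason, namely $0'\nleT J(B)$, not because $J(B)\join0'$ rises above $B\join0'$.
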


\begin{proof}
For every $B\in\GL$ we have $0'\nleT B$. Otherwise,
\[
  B'\eqT B\join0'\eqT B,
\]
contradicting strictness of the jump.

Let $\mathcal K\subseteq\GL$, and suppose that $\mathcal K$ has the
Pseudojump Inversion Property. Fix an index
$e_{\varnothing}$ such that
\[
  W_{e_{\varnothing}}^X=\varnothing
  \qquad\text{for every }X.
\]
Then $J_{e_{\varnothing}}(B)\eqT B$ for every $B$.
Applying \eqref{eq:PJI} with $A=0'$ and
$e=e_{\varnothing}$ gives a member $B\in \mathcal K$ satisfying
\[
  0'\eqT J_{e_{\varnothing}}(B)\eqT B.
\]
This contradicts $0'\nleT B$, since $B\in\GL$.
\end{proof}

\begin{proof}[Proof of Theorem~\ref{thm:main}]
Take a special $\Pi^0_1$ class $\mathcal Q\subseteq\GL$, as provided by
\cite[Theorem 6.2, parts 2 and 4]{JS}, and apply
Theorem~\ref{thm:extension}.
The resulting $\mathcal P$ has the Join Property, including its asserted
local form, but does not have the Pseudojump Inversion Property by
Proposition~\ref{prop:GLobstruction}.
\end{proof}

The use of the pseudojump operator $J_{e_{\varnothing}}$ above is
sufficient, but it is not essential. The following observation isolates the generalized-lowness
argument used in \cite[proof of Theorem 6.4]{JS}.

\begin{proposition}\label{prop:properpseudo}
Let $J$ be a pseudojump operator such that
\[
  X\ltT J(X)\quad\text{and}\quad J(X)'\eqT X'
  \qquad\text{for every }X.
\]
Then $0'\nleT J(B)$ for every $B\in\GL$.
\end{proposition}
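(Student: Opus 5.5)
Fix $B\in\GL$ and suppose, for a contradiction, that $0'\leT J(B)$. The plan is to push this hypothesis through the jump and compare with the generalized lowness of $B$. The key steps are the following.

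First, use the hypothesis $J(X)'\eqT X'$ at $X=B$ to get $J(B)'\eqT B'$. Since $B\in\GL$, this gives
\[
  J(B)'\eqT B'\eqT B\join0'.
\]
Next, use $B\ltT J(B)$, which gives in particular $B\leT J(B)$. Together with the assumption $0'\leT J(B)$, this yields $B\join0'\leT J(B)$. Combining with the previous display,
\[
  J(B)'\eqT B\join0'\leT J(B).
\]
This contradicts the strictness of the jump, namely $J(B)\ltT J(B)'$. Hence $0'\nleT J(B)$.

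One should double-check which half of the hypothesis is actually needed. The argument above uses only $B\leT J(B)$ and $J(B)'\eqT B'$. The strict inequality $X\ltT J(X)$ is not needed for this conclusion; its role is to ensure that $J$ is nontrivial, so that the resulting failure of pseudojump inversion is not merely the degenerate one witnessed by $J_{e_\varnothing}$ in Proposition~\ref{prop:GLobstruction}.

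I do not expect a genuine obstacle here. The only point requiring care is to invoke the jump-preservation hypothesis at the specific real $B$, rather than relying on any uniformity over $X$. I would also remark afterwards how the proposition yields failure of $\PJI$: taking $A=0'$ in \eqref{eq:PJI} for such a $J$ would require some $B\in\mathcal K$ with $0'\eqT J(B)$, which is impossible for $B\in\GL$.
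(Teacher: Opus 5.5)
Your argument is correct and is essentially the paper's proof: from $0'\leT J(B)$ and $B\leT J(B)$ you get $J(B)'\eqT B'\eqT B\join0'\leT J(B)$, which contradicts strictness of the jump. The only difference is cosmetic: the paper gets $B\leT J(B)$ from the form $J(X)=X\join W_e^X$ of a pseudojump rather than from the hypothesis $X\ltT J(X)$, which fits your observation that the strict inequality is not needed.
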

\begin{proof}
If $0'\leT J(B)$, the definition of a pseudojump also gives
$B\leT J(B)$. Hence
\[
  J(B)'\eqT B'\eqT B\join0'\leT J(B),
\]
contradicting strictness of the jump.
\end{proof}

Such an operator is used in \cite[proof of Theorem 6.4]{JS}.
Consequently, for the constructed class $\mathcal P$, pseudojump inversion
already fails for this single fixed operator: taking $A=0'$, there is
no $B\in\mathcal P$ for which
\[
  0'\eqT J(B)\eqT B\join0'.
\]
Indeed, Proposition~\ref{prop:properpseudo} rules out even the first
equivalence. Thus the obstruction persists for a pseudojump operator
which strictly raises every input degree. This observation is not
needed for the main separation.

\section{Concluding remarks and further questions}\label{sec:conclusion}

We have shown that the Join Property and the Pseudojump Inversion
Property can be separated on a special $\Pi^0_1$ class. In fact, the
separation can be obtained under the additional restriction that every
member of the class is generalized low, while the Join Property holds
locally on every nonempty relatively basic open subclass. The
extension theorem shows that this is not an isolated construction:
every special $\Pi^0_1$ class contained in $\mathrm{GL}_1$ can be
enlarged, without leaving $\mathrm{GL}_1$, to a class with the local
Join Property.

The role of generalized lowness suggests a first direction for further
study. In the present argument, $\mathrm{GL}_1$ serves both as a class
that can be preserved by the construction and as an obstruction to
pseudojump inversion. It would be natural to ask for which other
degree-theoretic classes $\mathcal C$ an analogous extension theorem is
possible: given a special $\Pi^0_1$ class
$\mathcal Q\subseteq\mathcal C$, when can one find a special
$\Pi^0_1$ extension $\mathcal P$ still contained in $\mathcal C$ and
having the local Join Property?  One may also ask how far the
generalized-lowness hypothesis can be weakened while still obtaining a
separation between the two properties.

A second issue concerns uniformity. For the fixed computable
construction, the functions $a\mapsto p(a)$ and
$a\mapsto\ep(a)$ are uniformly computable in $0'$, since
their stage approximations are computable and eventually
constant at each input. Hence, given $\sigma$ with
$\mathcal P\cap\cyl{\sigma}\ne\varnothing$, one can use $0'$
to search for an address $a$ such that $\sigma\preceq p(a)$;
Lemma~\ref{lem:localroot} guarantees that this search terminates.
The coding proof nevertheless fixes auxiliary data: an
orientation $\widetilde Z\in\{Z,\overline Z\}$ with
$\overline{\widetilde Z}$ not c.e., an index witnessing
$Z\join0'\leT A$, and the finite initial data used by the
decoders. In particular, the decoder from $B\join Z$ is
not given $0'$ as an additional oracle. It is therefore
natural to investigate stronger uniform local formulations
in which the permitted oracle inputs and reduction indices
are specified explicitly, and to determine which auxiliary
choices can be eliminated.

Finally, Appendix~\ref{app:fibres} shows that every jump-inversion
fibre above $0'$ of a class with the Join Property is countably infinite.
It is natural to investigate the order-theoretic structure of
these fibres and to determine which further constraints follow
from additional hypotheses on the class.

More generally, since pseudojump operators admit finite and transfinite
iterations, it would be interesting to investigate analogous
class-restricted inversion and join phenomena for iterated pseudojumps
and for higher levels of the corresponding hierarchies.

\appendix
\section{Infinitely many degrees in each Join Property fibre}\label{app:fibres}

This appendix is independent of the construction. It records the
additional fibre result without assuming that the Turing jump preserves
finite joins.

Write $\degT(B)$ for the Turing degree of $B$, that is, its
equivalence class under $\eqT$.

For an arbitrary class
$\mathcal R\subseteq\Cantor$ and $A\geT0'$, put
\[
 \mathcal F_{\mathcal R}(A)=
 \{\degT(B):B\in\mathcal R\ \&\
   B'\eqT B\join0'\eqT A\}.
\]

Thus $\mathcal F_{\mathcal R}(A)$ is the generalized-low
part of the ordinary jump-inversion fibre.
Recall that a real $G$ is \emph{$1$-generic relative to $H$} if, for
every $H$-c.e. set of strings $W\subseteq\Strings$, some prefix
$\sigma\prec G$ satisfies
\[
  \sigma\in W
  \quad\text{or}\quad
  \{\tau\in W:\sigma\preceq\tau\}=\varnothing.
\]
These alternatives say that $G$ \emph{meets} or \emph{avoids} $W$,
respectively. A set of strings is \emph{dense} if every finite binary
string has an extension in it. Finally, a $\Sigma^0_1(H)$ question
is an existential question over an $H$-computable predicate, with
finite parameters allowed; such questions are decidable using $H'$.

\begin{lemma}\label{lem:finitegeneric}
Suppose $m\ge1$ and $B_i'\leT A$ for each $i<m$. There is a
noncomputable $G\leT A$ which is $1$-generic relative to each $B_i$.
Moreover,
\[
  (B_i\join G)'\eqT B_i'\join G\qquad(i<m).
\]
\end{lemma}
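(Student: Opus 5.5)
We build $G$ by a finite-extension construction computable in $A$. Put $H=B_0\join\cdots\join B_{m-1}$; then $H'\leT A$, since each $B_i'\leT A$ and $H'$ is computable from the join of finitely many $B_i'$. Fix this finite reduction.

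Enumerate the requirements as follows. For $i<m$ and $e\in\N$ there is the genericity requirement $R_{i,e}$, which asks $G$ to meet or avoid $W_e^{B_i}\subseteq\Strings$. For each $e$ there is the requirement $N_e$, which asks that $G\ne\varphi_e$; this gives noncomputability. At stage $s$ we have a string $\sigma_s$.

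\emph{Stage for $R_{i,e}$.} The question "is there $\tau\succeq\sigma_s$ with $\tau\in W_e^{B_i}$?" is $\Sigma^0_1(B_i)$, so $B_i'$, and hence $A$, decides it. If the answer is yes, we search ($B_i$-computably, so $A$-computably) for such a $\tau$ and set $\sigma_{s+1}=\tau$. Otherwise we set $\sigma_{s+1}=\sigma_s$, and then $G$ avoids $W_e^{B_i}$.

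\emph{Stage for $N_e$.} Let $m_0=|\sigma_s|$. Using $0'\leT A$ (note $0'\leT B_0'$), we ask whether $\varphi_e(m_0)\downarrow\in\{0,1\}$. If so, we append the opposite bit; otherwise we append $0$.

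Let $G=\bigcup_s\sigma_s$. It is total because we also append a bit at each $N$-stage, $G\leT A$, $G$ is noncomputable, and $G$ is $1$-generic relative to each $B_i$.

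For the jump equation, fix $i$. The direction $B_i'\join G\leT(B_i\join G)'$ is immediate. For the converse, $e\in(B_i\join G)'$ iff $\Phi_e^{B_i\join G}(e)\downarrow$. Let $W$ be the $B_i$-c.e. set of strings $\tau$ such that $\Phi_e^{B_i\join\tau}(e)\downarrow$ with use bounded by $|\tau|$; its index is computable from $e$. By genericity, $G$ either meets $W$, in which case the computation converges, or avoids it via some $\sigma\prec G$, in which case it diverges. With oracle $B_i'\join G$ we search for $n$ such that either $G\upharpoonright n\in W$, which $B_i'$ decides, or no extension of $G\upharpoonright n$ lies in $W$, which is a $\Pi^0_1(B_i)$ question decided by $B_i'$. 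Genericity guarantees that the search terminates, and the case found gives the answer.

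The main point to check is that the oracle used for $R_{i,e}$ is $A$ and not $H'$-uniform information that $A$ lacks. This is resolved because $A\geT B_i'$ individually for each of the finitely many $i$, so no joint jump is needed. We therefore drop $H$ and use the $m$ separate reductions $B_i'\leT A$, which $A$ can combine since $m$ is finite.
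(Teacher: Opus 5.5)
Your proof is correct and is essentially the paper's argument. It uses the same $A$-computable finite-extension construction, applying each reduction $B_i'\leT A$ separately, and the same meet-or-avoid search computing $(B_i\join G)'$ from $B_i'\join G$. The only difference is that you get noncomputability by explicit diagonalization (via $0'\leT B_0'\leT A$), whereas the paper derives the stronger $G\nleT B_i$ directly from genericity.

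You should delete, rather than merely set aside, your opening claim that $H'\leT B_0'\join\cdots\join B_{m-1}'$, because it is false in general. A low c.e. splitting $B_0\join B_1\eqT 0'$ with $A=0'$ gives $B_i'\leT A$ but $H'\eqT 0''\nleT A$. This is exactly why the paper, like your final paragraph, uses the reductions $B_i'\leT A$ separately.
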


\begin{proof}
\noindent\emph{Construction of $G$.}
For each $i<m$, fix an index witnessing $B_i'\leT A$.
Since the family is finite, these indices can be built into
a single oracle program. Also $B_i\leT B_i'\leT A$ for
every $i<m$.

View $W_e^{B_i}$ as a set of finite binary strings under a
fixed computable coding. Fix a computable enumeration without repetitions of all
pairs $(i,e)$ with $i<m$ and $e\in\N$.
Using $A$, construct a nested sequence of strings
$(\sigma_s)_{s\in\N}$, starting with the empty string.

At the step for $(i,e)$, use $B_i'$ to decide whether
\[
  \exists\tau\succeq\sigma_s\;
  \bigl(\tau\in W_e^{B_i}\bigr).
\]
This is a $\Sigma^0_1(B_i)$ question, uniformly in the
finite parameters. If the answer is positive, enumerate
$W_e^{B_i}$ until such a $\tau$ appears. If the answer is
negative, put $\tau=\sigma_s$. In either case set
\[
  \sigma_{s+1}=\tau\concat0.
\]
All these operations are computable in $A$.

Let $G=\bigcup_s\sigma_s$. The lengths increase at every
step, so $G\leT A$. If a requirement receives a positive
answer, the chosen $\tau\in W_e^{B_i}$ remains a prefix of
$G$. If it receives a negative answer, $\sigma_s$ remains
a prefix of $G$ with no extension in $W_e^{B_i}$.
Thus $G$ is $1$-generic relative to each $B_i$.

\smallskip
\noindent\emph{Noncomputability.}
For every $i<m$, this implies $G\nleT B_i$. Indeed, if
$G\leT B_i$, then
\[
  D_G=\{\sigma\in2^{<\N}:
          \exists k<|\sigma|\;
          [\sigma(k)\ne G(k)]\}
\]
would be a dense $B_i$-computable set of strings.
No prefix of $G$ belongs to $D_G$, and density prevents
$G$ from avoiding it, contrary to relative genericity.
Since $m\geq1$, fixing any $i<m$ gives
$G\nleT B_i$, and hence $G$ is noncomputable.

\smallskip
\noindent\emph{The jump of $B_i\join G$.}
Finally, fix $i<m$. For each $e$, let
\[
  C_{i,e}=
  \{\tau\in2^{<\N}:
       \Phi_e^{B_i\join\tau}(e)\downarrow\}.
\]
Here $\Phi_e^{B_i\join\tau}(e)\downarrow$ means that the computation
converges using only bits of the finite second oracle below
$|\tau|$.
Consequently, this finite computation persists, with the same
outcome, for every infinite extension of $\tau$ used as the
second oracle.
The set $C_{i,e}$ is upward closed and uniformly c.e. in $B_i$.

Since $G$ is $1$-generic relative to $B_i$, it either meets
$C_{i,e}$ or has a prefix with no extension in $C_{i,e}$.
Using $B_i'\join G$, examine successive prefixes $\sigma$
of $G$. The oracle $B_i'$ decides both whether
$\sigma\in C_{i,e}$ and whether some extension of $\sigma$
belongs to $C_{i,e}$. If $\sigma\in C_{i,e}$, answer that
$e\in(B_i\join G)'$. If no extension of $\sigma$ belongs
to $C_{i,e}$, answer that $e\notin(B_i\join G)'$.
By the preceding genericity alternative, one of these cases
is eventually found.

The positive answer is correct because the finite computation
witnessed by the prefix $\sigma$ persists along $G$. The negative answer is
correct because any convergence with oracle $B_i\join G$
would be witnessed by a sufficiently long prefix of $G$
extending $\sigma$. Hence
\[
  (B_i\join G)'\leT B_i'\join G.
\]
Conversely, monotonicity of the jump gives
$B_i'\leT(B_i\join G)'$, and
$G\leT B_i\join G\leT(B_i\join G)'$.
Therefore
\[
  (B_i\join G)'\eqT B_i'\join G.
\]
\end{proof}

\begin{proposition}\label{prop:infinitefibres}
If $\mathcal R$ has the Join Property, then
$\mathcal F_{\mathcal R}(A)$ is countably infinite for every
$A\geT0'$.
\end{proposition}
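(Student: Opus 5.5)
Countability is immediate, since there are only countably many Turing degrees below $A$ and every member of $\mathcal F_{\mathcal R}(A)$ is the degree of some $B\leT B'\eqT A$. The work is to produce infinitely many distinct degrees. We argue by contradiction: suppose $\mathcal F_{\mathcal R}(A)=\{\degT(B_0),\dots,\degT(B_{m-1})\}$ is finite. The Join Property with $Z=0'$ is not available, but with any noncomputable $Z\leT0'$, say $Z=0'$ itself (which is noncomputable), applied to $A\geT Z\join0'$, it gives some $B\in\mathcal R$ with $A\eqT B'\eqT B\join0'$. Hence the fibre is nonempty and $m\ge1$. For each $i<m$ we have $B_i'\eqT A$, so Lemma~\ref{lem:finitegeneric} applies and yields a noncomputable $G\leT A$ which is $1$-generic relative to every $B_i$, with $(B_i\join G)'\eqT B_i'\join G$.

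Now apply the Join Property with $Z=G$. Since $G\leT A$ and $0'\leT A$, we have $A\geT G\join0'$, and $G\gtT0$. This gives $B\in\mathcal R$ with
\[
  A\eqT B'\eqT B\join0'\eqT B\join G .
\]
In particular $\degT(B)\in\mathcal F_{\mathcal R}(A)$, so $B\eqT B_i$ for some $i<m$. Then $B_i\join G\eqT A\eqT B_i'$, and taking jumps gives $(B_i\join G)'\eqT B_i''$. On the other hand, Lemma~\ref{lem:finitegeneric} gives $(B_i\join G)'\eqT B_i'\join G\leT A\eqT B_i'$, using $G\leT A$. Therefore $B_i''\leT B_i'$, which contradicts strictness of the jump. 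So the fibre is infinite.

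The only delicate point is checking that the hypotheses line up: the Join Property requires $A\geT Z\join0'$ with $Z$ noncomputable, and Lemma~\ref{lem:finitegeneric} requires $B_i'\leT A$. Both hold by the choices above. The main obstacle is the nonemptiness step giving $m\ge1$, which the lemma needs. It is handled by one preliminary application of the Join Property with $Z=0'$; alternatively one can note that for $m=0$ the first application already exhibits a member, contradicting emptiness. No preservation of joins by the jump is used beyond the genericity clause of the lemma.
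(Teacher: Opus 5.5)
Your proof is correct and follows the paper's argument. Nonemptiness comes from the Join Property with $Z=0'$, and countability from $B\leT A$; Lemma~\ref{lem:finitegeneric} then supplies $G$, a second application with $Z=G$ finishes, and your contradiction $B_i''\leT B_i'$ is just a rephrasing of the paper's $(B_j\join G)'\leT A\leT B_j\join G$. The only blemish is the sentence saying the Join Property with $Z=0'$ is ``not available'': it is available, and it is exactly the application you (and the paper) then make.
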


\begin{proof}
Fix $A\geT0'$.
Since $0'\gtT0$ and $0'\join0'\eqT0'\leT A$, the Join Property
applied with $Z=0'$ gives a real $C\in \mathcal R$ such that
\[
  C'\eqT C\join0'\eqT A.
\]
Thus $\mathcal F_{\mathcal R}(A)$ is nonempty.

If $B$ represents a degree in this fibre, then
$B\leT B'\eqT A$, and hence $B\leT A$.
Since only countably many reals are computable in a fixed oracle,
$\mathcal F_{\mathcal R}(A)$ is at most countable.

Suppose for a contradiction that the fibre is finite.
Choose representatives $B_0,\ldots,B_{m-1}\in \mathcal R$, with
$m\geq1$, for all its degrees, so that
\[
  B_i'\eqT B_i\join0'\eqT A
  \qquad(i<m).
\]
Apply Lemma~\ref{lem:finitegeneric} to this finite family.
It gives a noncomputable $G\leT A$ such that
\[
  (B_i\join G)'
  \eqT B_i'\join G
  \eqT A
  \qquad(i<m),
\]
where the last equivalence uses $B_i'\eqT A$ and
$G\leT A$.

For each $i<m$, we have $A\nleT B_i\join G$.
Indeed, otherwise
\[
  (B_i\join G)'\leT A\leT B_i\join G,
\]
contradicting strictness of the jump.

Since $G$ is noncomputable and $G\join0'\leT A$,
the Join Property, now applied with $Z=G$, supplies
a real $B\in \mathcal R$ satisfying
\[
  A\eqT B'\eqT B\join0'\eqT B\join G.
\]
Thus $\degT(B)\in\mathcal F_{\mathcal R}(A)$, so $B\eqT B_j$
for some $j<m$. Consequently,
\[
  A\eqT B\join G\eqT B_j\join G,
\]
contrary to $A\nleT B_j\join G$.

Hence the fibre is infinite, and therefore countably infinite.
\end{proof}

Consequently, if $\mathcal R$ has the Join Property and $A\geT0'$,
the ordinary jump-inversion fibre
$\{\degT(B):B\in\mathcal R\ \&\ B'\eqT A\}$
is also countably infinite: it contains $\mathcal F_{\mathcal R}(A)$,
and every one of its representatives is computable in $A$.

\begin{remark}
The genericity construction uses the reductions $B_i'\leT A$
separately. In particular, it does not assume that the jump of
$B_0\join\cdots\join B_{m-1}$ is computable in $A$.
\end{remark}

\section*{Acknowledgements}
The main results were discovered by ChatGPT 6 Pro (OpenAI). The author 
takes full responsibility for the mathematical content.


\begin{thebibliography}{99}


\bibitem{CDJL}
R.~Coles, R.~Downey, C.~G. Jockusch, Jr., and G.~LaForte,
\emph{Completing pseudojump operators},
Ann. Pure Appl. Logic \textbf{136} (2005), no.~3, 297--333.


\bibitem{DG}
R.~Downey and N.~Greenberg,
\emph{Pseudo-jump inversion, upper cone avoidance, and
strong jump-traceability},
Adv. Math. \textbf{237} (2013), 252--285.

\bibitem{JS}
H.~R. Jananthan and S.~G. Simpson,
\emph{Pseudojump inversion in special r.\ b.\ $\Pi^0_1$ classes},
arXiv:2102.06135v1, 2021.
\url{https://arxiv.org/abs/2102.06135v1}.

\bibitem{JSh}
C.~G. Jockusch, Jr., and R.~A. Shore,
\emph{Pseudojump operators. I: The r.e. case},
Trans. Amer. Math. Soc. \textbf{275} (1983), 599--609.

\bibitem{JShII} C.~G. Jockusch, Jr., and R.~A. Shore,
\emph{Pseudo-jump operators. II: Transfinite iterations,
hierarchies and minimal covers},
J. Symbolic Logic \textbf{49} (1984), no.~4, 1205--1236.



\bibitem{Odifreddi}
P. Odifreddi,
\emph{Classical Recursion Theory. Vol.~II},
Studies in Logic and the Foundations of Mathematics, Vol. 143,
North-Holland, Amsterdam, 1999.

\bibitem{PR}
D.~B. Posner and R.~W. Robinson,
\emph{Degrees joining to $0'$},
J. Symbolic Logic \textbf{46} (1981), 714--722.

\bibitem{Soare}
R.~I. Soare,
\emph{Recursively Enumerable Sets and Degrees:
A Study of Computable Functions and Computably Generated Sets},
Perspectives in Mathematical Logic,
Springer-Verlag, Berlin, 1987.

\end{thebibliography}
\end{document}